\newcommand{\eps}{\varepsilon}
\newcommand{\ex}{\mathbb{E}}
\newcommand{\cH}{\mathcal{H}}
\newcommand{\cA}{\mathcal{A}}
\newcommand{\cR}{\mathcal{R}}
\newcommand{\A}{\mathcal{A}}
\newcommand{\cG}{\mathcal{G}}
\newtheorem{theorem}{Theorem}  
\newtheorem{lemma}[theorem]{Lemma}
\newtheorem{corollary}[theorem]{Corollary}
\newtheorem{proposition}[theorem]{Proposition}
\def\COMMENT#1{}
\def\TASK#1{}
\numberwithin{theorem}{section}
\numberwithin{equation}{section}
\newdimen\margin   
\def\textno#1&#2\par{%
   \margin=\hsize
   \advance\margin by -4\parindent
          \setbox1=\hbox{\sl#1}%
   \ifdim\wd1 < \margin
      $$\box1\eqno#2$$%
   \else
      \bigbreak
      \hbox to \hsize{\indent$\vcenter{\advance\hsize by -3\parindent
      \it\noindent#1}\hfil#2$}%
      \bigbreak
   \fi}
\def\noproof{{\unskip\nobreak\hfill\penalty50\hskip2em\hbox{}\nobreak\hfill%
       $\square$\parfillskip=0pt\finalhyphendemerits=0\par}\goodbreak}
\def\endproof{\noproof\bigskip}
\def\proof{\removelastskip\penalty55\medskip\noindent{\bf Proof. }}
\title{On P\'osa's conjecture for random graphs}
\author{Daniela K\"uhn and Deryk Osthus}
\thanks {D.~K\"uhn was supported by the ERC, grant no.~258345.}
\begin{document}

\begin{abstract}
The famous P\'osa conjecture states that every graph of minimum degree at least $2n/3$ contains the square of a Hamilton cycle.
This has been proved for large $n$ by Koml\'os, Sark\"ozy and Szemer\'edi.
Here we prove that if $p \ge n^{-1/2+\eps}$, then asymptotically almost surely, the binomial random graph $G_{n,p}$
contains the square of a Hamilton cycle. 
This provides an `approximate threshold' for the property in the sense that the result fails to hold if $p\le n^{-1/2}$. 
\end{abstract}

\date{\today}

\maketitle 

\section{Introduction} 
The $k$th power of a cycle $C$ is obtained by including an edge between all pairs of vertices whose distance on $C$ is at most $k$.
The P\'osa-Seymour conjecture states that every graph $G$ on $n$ vertices with minimum degree at least $kn/(k+1)$ contains the $k$th power of a Hamilton cycle.
(Here the case $k=2$ was conjectured by P\'osa and the general case was later conjectured by Seymour.)
This beautiful conjecture was proved for large $n$ by Koml\'os, Sark\"ozy and Szemer\'edi~\cite{KSSz98}.
The case $k=1$ of course corresponds to Dirac's theorem~\cite{Dirac} on Hamilton cycles.
For $k=2$, there have been significant improvements in the bound on $n$ that is required (see e.g.~\cite{CDH}).
More generally, many other recent advances have been made on embedding spanning subgraphs in dense graphs (see e.g.~\cite{BCCsurvey} for a survey).
For instance, recall that $G$ has an \emph{$F$-factor} if $G$ contains $\lfloor |G|/|F|\rfloor$ vertex-disjoint copies of $F$.
The famous Hajnal-Szemer\'edi theorem~\cite{HSz} states that every graph with minimum degree at least $kn/(k+1)$ contains a $K_{k+1}$-factor.
More generally, K\"uhn and Osthus~\cite{KOmatch} determined  the minimum degree that $G$ needs to have to ensure the existence of an $F$-factor in $G$
(up to an additive constant depending on~$F$).

It is natural to ask for probabilistic analogues of these results, i.e.~given a graph $H$ on $n$ vertices, how large does $p$ have to be to ensure that 
$G_{n,p}$ a.a.s.~contains a copy of $H$?
Here $G_{n,p}$ denotes the binomial random graph on $n$ vertices with edge probability $p$ and
we say that a property $A$ holds a.a.s.~(asymptotically almost surely), if the probability that $A$ holds tends to $1$ as $n$ tends to infinity.
(Note that formally one actually needs to ask the above question for a sequence of graphs $H_i$ whose order tends to infinity.)

This turns out to be a surprisingly difficult problem, and the answer is known for very few (families of) graphs $H$.
A notable exception is the seminal result of Johansson, Kahn and Vu~\cite{JKV08}, who determined the `approximate' threshold for the existence of an $F$-factor. 
So this is a probabilistic version of the result in~\cite{KOmatch} mentioned above.
Also, Riordan~\cite{riordan} obtained a very general result, which gives a bound that can be applied to every graph $H$.
As a corollary, he obtained the threshold for the existence of a spanning hypercube in $G_{n,p}$
and several kinds of spanning lattices, e.g.~the square grid.
His result can be applied to powers of Hamilton cycles to give the following result (see Section~\ref{sec:riordan} for the straightforward details):

\begin{theorem} \label{thm:riordan}
Let  $k\geq 2$ be fixed. Suppose that $p n^{1/k} \to \infty$ and $p n^{1/3} \to \infty$.
Then a.a.s.~$G_{n,p}$ contains the $k$th power  of a Hamilton cycle.
\end{theorem}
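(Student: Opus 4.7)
The plan is to deduce Theorem~\ref{thm:riordan} by applying Riordan's general spanning-subgraph theorem~\cite{riordan} to the graph $H = C_n^k$. Riordan's theorem provides a sufficient condition for $G_{n,p}$ to a.a.s.\ contain a spanning subgraph $H$ of bounded maximum degree; in our setting the relevant form is that, if the expected number $\mu(H)$ of copies of $H$ in $G_{n,p}$ tends to infinity and in addition $n^2 p^{\gamma(H)}/e(H) \to \infty$, where
\[
\gamma(H) := \max_{F \subseteq H,\, v(F) \ge 3} \frac{e(F)}{v(F) - 2},
\]
then $G_{n,p}$ a.a.s.\ contains $H$. So the task reduces to computing the relevant parameters for $C_n^k$ and checking that our two stated hypotheses imply Riordan's two conditions.

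The easy inputs are $v(H) = n$, $e(H) = kn$, $\Delta(H) = 2k$, $|\mathrm{Aut}(H)| = 2n$, and Stirling's formula gives
\[
\mu(H) = \frac{n!}{2n}\, p^{kn} = \Theta\!\left( \frac{(np^k)^n}{n^{3/2}\, e^n} \right),
\]
so $\mu(H) \to \infty$ precisely when $np^k \to \infty$, i.e.\ when $pn^{1/k} \to \infty$; that is the first hypothesis.

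The substantive combinatorial step is the evaluation of $\gamma(H)$. Because every edge of $C_n^k$ joins two vertices within cyclic distance $k$, I expect the densest induced subgraph on any given number of vertices to be a run of consecutive vertices on the cycle; ruling out non-consecutive configurations is the one point that requires a brief verification (any set that is not an interval omits some short-distance pair from its span and hence cannot have more edges than the corresponding interval). A run of $\ell \ge k+1$ consecutive vertices induces exactly $k\ell - \binom{k+1}{2}$ edges, so
\[
\frac{e(F)}{v(F) - 2} = \frac{k\ell - \binom{k+1}{2}}{\ell - 2}.
\]
Its derivative in $\ell$ has sign $\binom{k+1}{2} - 2k = k(k-3)/2$, so this function is decreasing in $\ell$ when $k \le 3$ and increasing when $k \ge 4$. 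Its supremum over $\ell \in \{k+1,\dots,n\}$ therefore equals $\binom{k+1}{2}/(k-1) = 3$ when $k \in \{2,3\}$ and tends to $k$ from below when $k \ge 4$. In either regime $\gamma(H) \le \max\{3,k\} + o(1)$, so Riordan's density condition (using $e(H) = kn$) simplifies to $p \gg n^{-1/\max\{3,k\}}$, which is exactly what is supplied by the pair of hypotheses $pn^{1/k} \to \infty$ and $pn^{1/3} \to \infty$ (with one of them being the binding constraint in each regime). With both Riordan conditions verified, the theorem follows immediately; the only real work is the density calculation above, and within that, the locality observation that reduces the problem to intervals of consecutive vertices.
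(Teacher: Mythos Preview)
Your strategy is the same as the paper's: apply Riordan's theorem after bounding $\gamma(C_n^k)=d_2^{\max}(C_n^k)$. The density computation, however, has a real gap. Your edge-count formula $k\ell-\binom{k+1}{2}$ for an interval of $\ell$ vertices holds only when the interval is a proper arc (roughly $\ell\le n-k$); at $\ell=n$ you recover the full $C_n^k$ with $kn$ edges, giving ratio $kn/(n-2)$, which exceeds $k$ for every $k\ge 3$. So for $k\ge 3$ the supremum over subgraphs does not approach $k$ from below, nor equal exactly $3$ when $k=3$; it is $k+\Theta(1/n)$. This is still $k+o(1)$ and the theorem survives, but the calculation as written is wrong. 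Separately, the one-line reduction to intervals is not a proof: a non-interval set has fewer vertices than its span, so ``omits a short-distance pair from its span'' does not by itself show the non-interval set is sparser in the ratio $e/(v-2)$; a genuine shifting/compression argument would be needed.

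The paper avoids both issues by not attempting to identify the extremal $F$. If $H\subseteq P_n^k$ (an arbitrary subgraph, not just an interval), the natural ordering of $V(H)$ along $P_n$ shows each vertex contributes at most $k$ edges backward, whence $d_2(H)\le k$ when $k\ge 3$. If $H\not\subseteq P_n^k$, then $H$ must contain an edge wrapping around the cycle, which forces $v_H\ge n/k$; deleting the at most $\binom{k+1}{2}$ wrap-around edges reduces to the path case and gives $d_2(H)\le k+O(k^3/n)$. This is shorter, handles all subgraphs at once, and requires no optimization over $\ell$. You should also note that Riordan's hypotheses (as the paper states them) include $(1-p)\sqrt{n}\to\infty$, which fails for $p$ near $1$; the paper dispatches that regime by monotonicity of the target property, and you should say so as well.
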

A simple first moment argument shows that this result gives the correct threshold for $k \ge 3$. Indeed, note that the number of edges in 
the $k$th power of a cycle of length $n > 2k$ is $kn$. So if $n > 2k$ and $p \le n^{-1/k}$, it follows that 
the expected number of appearances of the $k$th power of a Hamilton cycle in $G_{n,p}$ is at most
$n!p^{kn} \leq  ( n p^{k}/2)^n = o(1)$. 

However, for squares (i.e.~when $k=2$) Theorem~\ref{thm:riordan} does not give the correct answer.
Indeed, the above first moment argument suggests that the threshold should be close to $n^{-1/2}$.
Our main result is an `approximate' threshold, i.e.~our bound on $p$ is tight up to a factor of $n^{\eps}$, where $\eps>0$ is arbitrary.
Our argument works for higher powers in the same way as it does for squares, so we formulate our proof for arbitrary $k \ge 2$.

\begin{theorem} \label{thm:main}
Let $\eps>0$ and $k\geq 2$ be fixed. Suppose that $p=p(n)\geq n^{-1/k + \eps}$.
Then a.a.s.~$G_{n,p}$ contains the $k$th power $C^k$ of a Hamilton cycle.
\end{theorem}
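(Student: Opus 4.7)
The plan is the absorbing method, which has become the standard tool for embedding spanning structures in sparse random graphs. I would first split the edges of $G_{n,p}$ into a constant number of independent copies of $G_{n,p'}$ with $p' = \Theta(p)$, so that the different phases of the construction use fresh randomness. The governing parameter throughout is $np^k \ge n^{k\eps}$, which grows as a positive power of $n$: this is exactly what allows the core extension step (adding one vertex to the end of a $k$th-power path requires a common neighbour of its last $k$ vertices) to succeed for typical configurations.

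The construction has four phases. \emph{(i) Absorbing path.} I would build a $k$th-power path $P_{\mathrm{abs}}$ of sublinear order with the property that for every sufficiently small $S \subseteq V(G)\setminus V(P_{\mathrm{abs}})$ the vertices of $S$ can be inserted so as to yield a new $k$th-power path on $V(P_{\mathrm{abs}})\cup S$ with the same endpoints. The building block is a constant-size ``absorbing gadget'' swallowing a single vertex; I would then glue many such gadgets into one $k$th-power path and show, by a random-greedy or second-moment argument, that every vertex of $G$ has many absorbers inside $P_{\mathrm{abs}}$. \emph{(ii) Reservoir and connecting lemma.} Independently I would pick a small random reservoir set $R$ and prove a connecting lemma: any two ``ends'' of $k$th-power paths (i.e.\ two $k$-cliques playing the roles of the last $k$ vertices) can be joined by a short $k$th-power connector through $R$. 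The proof rests on iterated one-vertex extension, again using that common neighbourhoods in $G_{n,p}$ at this density are polynomially large.

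\emph{(iii) Almost-spanning cover.} On $V\setminus(V(P_{\mathrm{abs}})\cup R)$ I would construct a bounded number of long vertex-disjoint $k$th-power paths covering all but $o(n)$ vertices, via a P\'osa-type rotation--extension argument tailored to $k$th powers: at each step one extends or rotates using the common neighbourhood of a typical end-$k$-clique. Connectors from~(ii) then link these paths and $P_{\mathrm{abs}}$ together into a single $k$th-power path whose complement $L$ is small. \emph{(iv) Absorb and close.} The absorbing property of $P_{\mathrm{abs}}$ swallows each vertex of $L$, and a final connector through $R$ turns the resulting $k$th-power path into a $k$th-power Hamilton cycle.

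The chief obstacle, in my view, is phase~(i). Already for $k=2$ the absorber is a non-trivial substructure of constant size, and at $p$ close to the conjectured threshold each vertex has only a small polynomial number $n^{O(\eps)}$ of potential absorbers, so proving that \emph{every} vertex has many absorbers simultaneously (and that these can be pieced into a single $k$th-power path $P_{\mathrm{abs}}$) requires careful concentration, typically through a second-moment estimate coupled with a random-greedy or deletion procedure. A secondary challenge is phase~(iii): at this density $G_{n,p}$ does not satisfy the dense minimum-degree hypotheses underlying the Koml\'os--Sark\"ozy--Szemer\'edi proof of P\'osa's conjecture, so the regularity/blow-up machinery must be replaced by direct random-graph arguments, which is delicate for $k$th powers because every extension is constrained by $k$ simultaneous adjacencies rather than just one.
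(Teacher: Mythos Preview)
Your outline shares the absorbing-method skeleton with the paper, but the two implementations diverge at the two places that carry the real weight.

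\textbf{Absorber design.} You propose a sublinear path $P_{\mathrm{abs}}$ with the \emph{universal} absorption property (any small leftover set can be inserted), which forces you to show that \emph{every} vertex has polynomially many absorbers and then to thread these into one $k$th-power path. The paper does something structurally different and, in this setting, cleaner: it designs a gadget $A_{j,\ell,k}$ consisting of the $k$th power of a path $P$ together with a single extra ``absorption vertex'' $v$ and a second $k$th-power routing $(P')^k$ through $v$ with the same endsequences. Each absorber can swallow \emph{only its own} vertex $v$. After proving that every subgraph of $A_{j,\ell,k}$ has $1$-density at most $k+\delta$, the paper invokes Johansson--Kahn--Vu to obtain a \emph{linear} factor of absorbers. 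The set $A$ of all absorption vertices then doubles as your reservoir: all later connectors are routed through $A$, and whatever remains of $A$ at the end is absorbed automatically. This removes the need for the ``every vertex has many absorbers'' step and merges your phases~(i) and~(ii) into a single object.

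\textbf{Almost-spanning cover.} This is where your proposal has a genuine gap. You appeal to ``a P\'osa-type rotation--extension argument tailored to $k$th powers'', but no such argument is available, and it is not clear one exists: for $k=1$ any edge from the endpoint into the path yields a rotation, whereas for $k\ge 2$ one would need the terminal $k$-clique to have a common neighbour sitting at a very specific position along the path so that the re-routing is again a $k$th power, and there is no mechanism to propagate this. The paper sidesteps the issue entirely. Since $d_1^{\max}(P^k_r)\le k$, Johansson--Kahn--Vu gives a perfect $P^k_r$-factor on the uncovered vertex set (for suitable $r$), so the almost-spanning cover is obtained as a black box rather than built greedily. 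The resulting $O(n/r)$ paths, together with the absorber path, are then linked into one $k$th-power cycle via a connecting lemma proved by a second-moment/Kreuter-type count on a contracted multigraph (not by iterated one-vertex extension), using only vertices of $A$. Without a substitute for the rotation--extension step your phase~(iii) does not go through as written; the paper's reliance on Johansson--Kahn--Vu here is not incidental but the key device that makes the whole argument close.
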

Note that Theorems~\ref{thm:riordan} and~\ref{thm:main} as well as the result on $F$-factors in~\cite{JKV08} (see Theorem~\ref{thm:JKV}) imply that the threshold  
for a $K_{k+1}$-factor is much smaller than that for the $k$th power of a Hamilton cycle.
So this is different from the `deterministic' setting described earlier, where the minimum degree conditions are the same.

We now discuss some further related results on embedding spanning subgraphs in $G_{n,p}$.
The case of Hamilton cycles (i.e.~when $k=1$) has been studied successfully and in great detail.
In particular, a classical result of Koml\'os and Szemer\'edi~\cite{KomSze} and Korshunov~\cite{Korsh} 
implies that the threshold function for the existence of a Hamilton cycle is 
$\log n/n$. In fact, much more is true: a celebrated result of 
Bollob\'as~\cite{BolHam} and~Ajtai, Koml\'os and Szemer\'edi~\cite{AKSHam} states that the hitting time for the emergence of a Hamilton 
cycle on $n$ vertices coincides a.a.s.~with the hitting time of the property that the minimum degree is at least~2.
(An algorithmic version of this result was later proved by Bollob\'as, Fenner and Frieze~\cite{BFF}.) 
On the other hand, the expected number of Hamilton cycles already tends to infinity when $np \rightarrow \infty$. 
So the existence of vertices of degree less than two in $G_{n,p}$ can be viewed as a `local obstruction' to the existence of 
a Hamilton cycle in $G_{n,p}$. For $k \ge 3$, Theorem~\ref{thm:riordan} shows that there are no `local obstructions'.
It seems natural to conjecture that the case of squares is similar, i.e. that the threshold for the square of a Hamilton cycle in $G_{n,p}$
is at $p=n^{-1/2}$.

Another class of  subgraphs which has received much attention is that of spanning trees.
The best general result is due to Krivelevich~\cite{krivtrees}, who showed (amongst other results)
that if $T$ is any tree on $n$ vertices of bounded maximum degree and $p \ge n^{-1+\eps}$, then a.a.s.~$G_{n,p}$ contains a copy of $T$.
It seems likely that the term $n^\eps$ in this result can be replaced by a much smaller function.
This is supported by several results on certain classes of trees. For instance, the threshold for a Hamilton path is $p=\log n/n$ by the above results on Hamilton cycles.
As another example, Hefetz, Krivelevich, and Szab\'o~\cite{HKS} showed that $p=\log n/n$ is the (sharp)
threshold for a tree $T$ having a linear number of leaves.

In the probabilistic setting, it is also natural to ask for `universality' results. Again, this is a question where much progress has been made recently.
Given a graph $G$ and a family of graphs $\cH$, we say that a graph $G$ is \emph{$\cH$-universal} if $G$ contains every member of $\cH$ as a subgraph.
An important case is  when $\cH=\cH(n,\Delta)$ consists of all graphs on $n$ vertices with maximum degree at most $\Delta$.
Here the best bound is due to Dellamonica, Kohayakawa, R\"odl and Ruci\'nski~\cite{DKRR}, who showed that if $p \gg n^{-1/2\Delta} \log ^{1/\Delta}n$, 
then a.a.s.~$G_{n,p}$ is $\cH(n,\Delta)$-universal. Note that the $k$th power of a Hamilton cycle on $n>2k$ vertices has maximum degree $2k$.
So the bounds one obtains for this case are significantly weaker than the ones given by Theorems~\ref{thm:riordan} and~\ref{thm:main}.

The proof in~\cite{riordan} is based on the second moment method.
Instead, our proof is based on the `absorbing method', which was introduced as a general method by R\"odl, Ruci\'nski and Szemer\'edi~\cite{RRS}
(the underlying idea was also used earlier,  e.g.~by Krivelevich~\cite{Krivtriangles}).
The method has proved to be an extremely versatile tool
for embedding various types of spanning subgraphs in dense graphs. 
Though additional difficulties arise in the context of (sparse) random graphs, we believe that the method has significant further potential
in this setting.

This paper is organized as follows. After introducing some notation, we define an `absorber', which will be the crucial concept
for extending the $k$th power of an almost spanning cycle into the $k$th power of a 
Hamilton cycle. We then describe the proof of Theorem~\ref{thm:main} in Section~\ref{sec:mainproof}, under the assumption that
Lemmas~\ref{lem:Absorber_Factor},~\ref{lem:linking} and~\ref{lem:pathfactor} hold. 
Section~\ref{sec:mainproof} also contains an informal overview of the proof.
These lemmas are proved in the subsequent sections.
In the short final section, we show how Theorem~\ref{thm:riordan} follows from the more general result in~\cite{riordan}.


\section{Notation}
We write $|G|$ and sometimes also $v_G$ for the number of vertices of a graph $G$. We write $e(G)$ and sometimes also $e_G$ for the
number of edges of $G$. We say that two graphs $H$ and $G$ are disjoint if they are vertex-disjoint. 
Given graphs $G$ and $H$, an $H$-factor in $G$ is a collection of $\lfloor |G|/|H|\rfloor$ pairwise disjoint copies
of $H$ in~$G$.

We denote the path on $n$ vertices by $P_n$.
The \emph{distance} between two vertices $x$ and $y$ in a graph $G$ is the length (i.e.~the number of edges) of the shortest path between $x$ and $y$.
The \emph{$k$th power} of a graph $G$ is the graph $G^k$ whose vertex set is $V(G)$ and in which two vertices $x,y\in V(G)$
are joined by an edge if and only if the distance between $x$ and $y$ in $G$ is at most $k$.
So $P^k_n$ denotes the $k$th power of~$P_n$. Suppose that $n\ge 2k$ and that $P_n=x_1\dots x_n$. We will view $x_1$ as the first vertex of $P_n$
and $x_n$ as its final vertex. The \emph{initial endsequence} of $P^k_n$ is the sequence $x_1,\dots,x_k$ and the
\emph{final endsequence} of $P^k_n$ is the sequence $x_{n-k+1},\dots,x_n$.

Suppose that $A=(a_1,\dots,a_k)$ and $B=(b_1,\dots,b_k)$ are two sequences of vertices such that all these $2k$ vertices are
distinct from each other. An \emph{$(A,B)$-linkage} $R$  is defined as follows: 
let $R'$ be the $k$th power of a path such that the initial endsequence of $R'$ is $A$ and the final endsequence of $R'$ is $B$. 
Then we obtain $R$ by removing all edges within $A$ and within $B$.
We will use the notion of linkages to join
up $k$th powers of paths into longer ones. More precisely, suppose that $Q$ and $Q'$ are $k$th powers of paths
which are pairwise disjoint, that $A$ is the final endsequence of~$Q$, that $B$ is the initial endsequence of~$Q'$ and
that $R$ is an $(A,B)$-linkage which meets  $V(Q)\cup V(Q')$
only in $A\cup B$. Then $Q\cup R\cup Q'$ is again the $k$th power of a path.

We will omit floors and ceilings whenever this does not affect the argument. We write $\log n$ for the natural logarithm and
$\log ^a n:=(\log n)^a$.


\section{Absorbers}\label{sec:absorber}

The aim of this section is to define an \emph{absorber}, which is the main tool in our proof of Theorem~\ref{thm:main}.
Roughly speaking, an absorber $A$ will be the union of the $k$th power $P^k$ of a path $P$ and the $k$th power $(P')^k$ of a path $P'$
such that the following two properties are satisfied:
\begin{itemize}
\item The two endsequences of $P^k$ are the same as the two endsequences of $(P')^k$.
\item $V(P')$ is obtained from $V(P)$ by adding one extra vertex $v$ (which we call the absorbtion vertex).
\end{itemize}
Thus if we can find the $k$th power $C^*$ of some cycle which contains $P^k$ as a subgraph but does not contain~$v$, then
we can `absorb' $v$ into $C^*$ by replacing $P^k$ with $(P')^k$. When defining the absorber, we have to make sure that our random graph
$G_{n,p}$ a.a.s.~contains many disjoint copies of this absorber. A result of Johansson, Kahn and Vu (Theorem~\ref{thm:JKV} below) implies
that the latter will be the case if the $1$-densities of all subgraphs of the absorber are not too large.
(This will turn out to be true if the parameters $j$ and $\ell$ below satisfy $k\ll j\ll \ell$.)

More precisely, for all $k\ge 2$,  $j\geq 3$ and $\ell \geq 2k$, we will now define the $(j,\ell,k)$-\emph{absorber} $A_{j,\ell,k}$. 
Consider first a path $P$ on $s$ vertices, where $s:=j(2\ell+4)+\ell$, and a vertex~$v$ that does not belong to~$P$. 
We call $P$ the \emph{spine} of the absorber and~$v$ its \emph{absorbtion vertex}. We will view one endvertex of $P$ as its first vertex
and the other endvertex of $P$ as its last vertex. This induces an order of the vertices on~$P$.
Split $P$ into $j+1$ consecutive disjoint segments $S_1,\dots, S_{j+1}$ such that $S_i$ has $2\ell + 4$ vertices for each $i=1,\dots,j$
and $S_{j+1}$ consists of the final $\ell$ vertices of~$P$. 
For $i=1,\dots,j$, in $S_i$ we label the $(\ell +1)$st, the $(\ell+2)$nd, the $(2\ell + 3)$rd and the $(2\ell+4)$th vertices by 
$a_{i,1}$, $a_{i,2}$, $b_{i,1}$ and $b_{i,2}$, respectively. We call these special vertices \emph{junctions}.

We add the edges $a_{1,1}v$ and $vb_{1,2}$. 
For every $i=1,\dots,j-2$, we add the edges $a_{i,2}b_{i+1,2}$ and $b_{i,1}a_{i+1,1}$. 
Finally,  we add the edges $a_{j,2}b_{j,2}$, $a_{j-1,2}a_{j,1}$ and $b_{j-1,1}b_{j,1}$.  
We will be referring to the resulting graph (consisting of the spine~$P$, the absorbtion vertex $v$ and the edges incident to the junctions and to $v$
which we added) as the \emph{skeleton} of the absorber.

It is not hard to see that the graph $P'$ obtained from the skeleton by deleting the edges $a_{i,1}a_{i,2}$ and $b_{i,1}b_{i,2}$ for all $i=1,\dots,j$
is a path with $V(P')=V(P) \cup \{v\}$ and with the same endvertices as the spine~$P$ (see Figure~\ref{fig:absorber} for the case when $j=4$).%
    \COMMENT{More formally, $P'= [1 a_{1,1}]v \bigcup \left( \bigcup_{i=1}^{j-1} [b_{i,2} a_{i+1,1}] [b_{i,1} a_{i,2}] \right) \bigcup 
[b_{j-1,2}a_{j,1}] [a_{j-1,2} b_{j-1,1}] [b_{j,1} a_{j,2}] [b_{j,2}s]$.}
\begin{figure}[htp] 
\begin{center}
\includegraphics[scale=0.65]{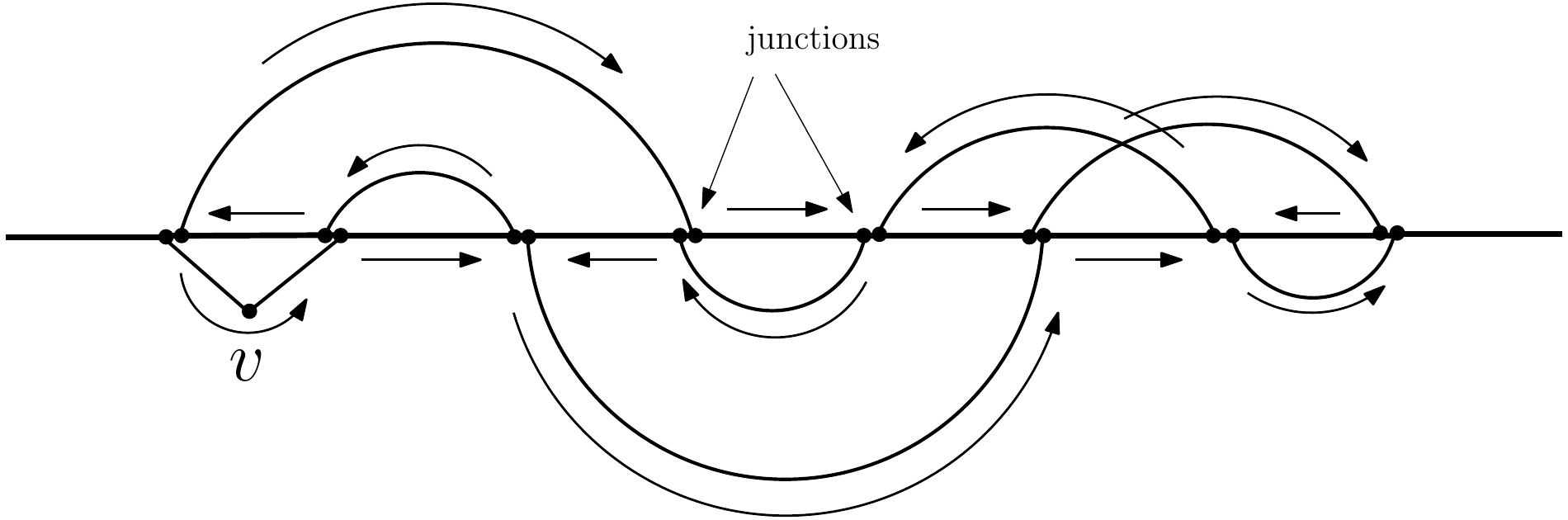}
\caption{The skeleton of a $(4,\ell)$-absorber. The path $P'$ is indicated by the arrows.}
\label{fig:absorber}
\end{center}
\end{figure}
\begin{figure}[htp] 
\begin{center}
\includegraphics[scale=0.55]{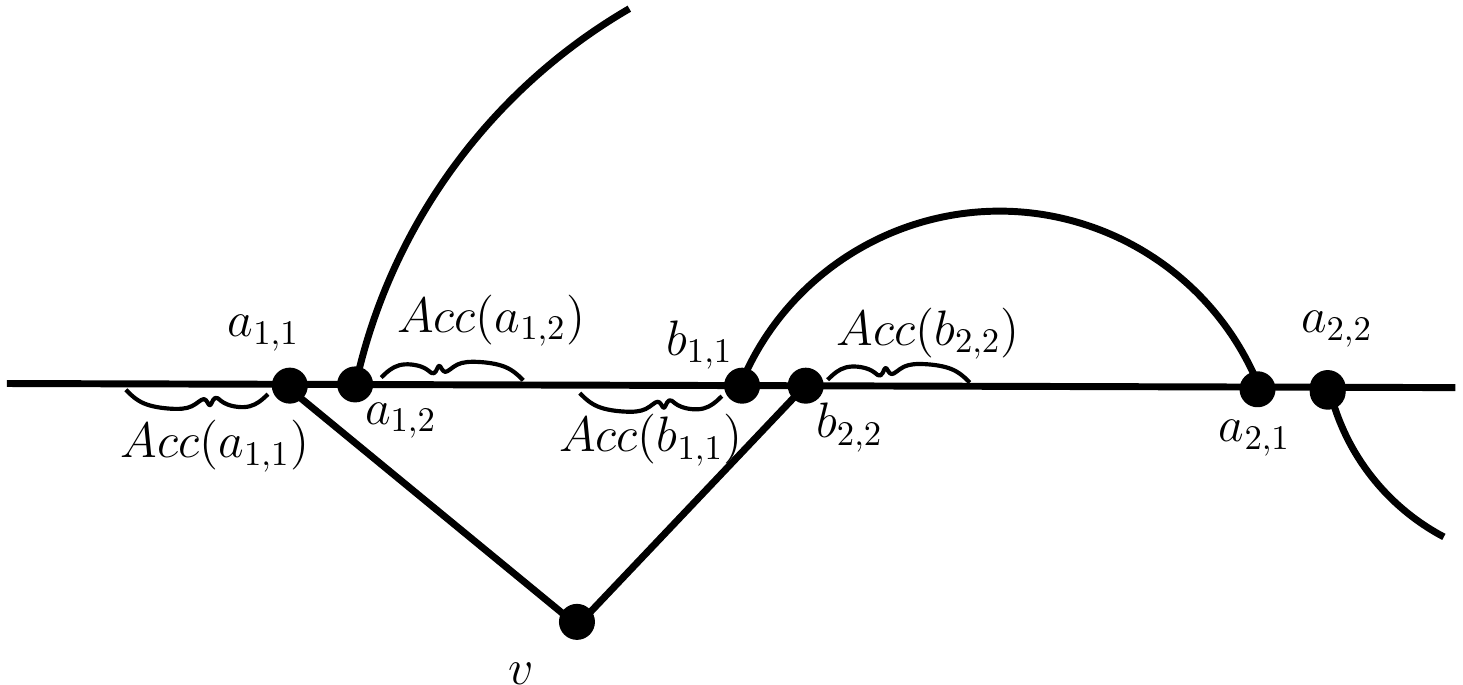}
\caption{Junctions and access vertices of a $(4,\ell)$-absorber.}
\label{fig:absorberdetail}
\end{center}
\end{figure}
We call $P'$ the \emph{augmented path} of the absorber and the edges in $E(P')\setminus (E(P)\cup\{a_{1,1}v,vb_{1,2}\})$ the \emph{junction edges}.
We define the $(j,\ell,k)$\emph{-absorber} $A_{j,\ell,k}$ to be%
    \COMMENT{Note that this is different from $(P\cup P')^k$.}
$P^k \cup (P')^k$. 
The \emph{first endsequence of} $A_{j,\ell,k}$ is the first endsequence of $P^k$ (and thus of $(P')^k$) and the \emph{final endsequence of} $A_{j,\ell,k}$ is
the final endsequence of $P^k$ (and thus of $(P')^k$). 

Given a junction $a$, let ${\rm Acc}(a)$ be the set consisting of $a$ as well as all the $k-1$ vertices that have distance at most
$k-1$ from $a$ in \emph{both} $P$ and $P'$ (see also Figure~\ref{fig:absorberdetail},
where these sets are marked for four of the junctions). Call the vertices in ${\rm Acc}(a)$ \emph{access vertices} associated with~$a$. 
Note that the following properties hold:
\begin{itemize}
\item[(A1)] Let $ab$ be any junction edge, where $a$ is the predecessor of $b$ on~$P'$. Then the subpath $Q_a$ of $P'$ induced by $a$ and the $\ell$ vertices preceding $a$ on~$P'$
is also a subpath of $P$ and ${\rm Acc}(a)$ is the set of all those vertices having distance at most $k-1$ from~$a$ on $Q_a$.
Similarly, the subpath $Q_b$ of $P'$ induced by $b$ and the $\ell$ vertices succeeding $b$ on~$P'$
is also a subpath of $P$ and ${\rm Acc}(b)$ is the set of all those vertices having distance at most $k-1$ from~$b$ on $Q_b$.
\item[(A2)] $a_{1,1}vb_{1,2}$ is a subpath of $P'$. The subpath $Q_{a_{1,1}}$ of $P'$ induced by $a_{1,1}$ and the $\ell$ vertices preceding $a_{1,1}$ on~$P'$
is also a subpath of $P$ and ${\rm Acc}(a_{1,1})$ is the set of all those vertices having distance at most $k-1$ from~$a_{1,1}$ on $Q_{a_{1,1}}$.
Similarly, the subpath $Q_{b_{1,2}}$ of $P'$ induced by $b_{1,2}$ and the $\ell$ vertices succeeding $b_{1,2}$ on~$P'$
is also a subpath of $P$ and ${\rm Acc}(b_{1,2})$ is the set of all those vertices having distance at most $k-1$ from~$b_{1,2}$ on $Q_{b_{1,2}}$.
\item[(A3)] The graph consisting of all junction edges, of the path $a_{1,1}vb_{1,2}$ and of all the edges $a_{i,1}a_{i,2}$, $b_{i,1}b_{i,2}$
(for all $i=1,\dots,j$) is a cycle.
\end{itemize}
(A1) and~(A2) together with the fact that $\ell\ge 2k$ imply that every edge $e\in E(A_{j,\ell,k})\setminus E(P^k)$ satisfies precisely one of the following conditions:
\begin{itemize}
\item There is precisely one junction edge $ab$ such that $e$ joins some vertex in ${\rm Acc}(a)$
to some vertex in ${\rm Acc}(b)$.
\item $e$ joins some vertex in ${\rm Acc}(a_{1,1})\cup \{v\}$ to some vertex in ${\rm Acc}(b_{1,2})\cup \{v\}$.
\end{itemize}
In the first case we say that $e$ is \emph{associated with~$ab$} (so $ab$ itself is associated with $ab$)
and in the second case we say that $e$ is \emph{associated with}~$v$.
Note that for every junction edge $ab$ there are precisely $\binom{k+1}{2}$ edges associated with~$ab$.
Indeed, let $a_k:=a$ and for each $i=1,\dots,k-1$ let $a_i$ be the vertex of distance $i$ from $a$ on $Q_a$,
where $Q_a$ is as defined in~(A1). (So  ${\rm Acc}(a)=\{a_1,\dots,a_k\}$.) Then $a_i$ has precisely $i$ neighbours in ${\rm Acc}(b)$.
Similarly, precisely $\binom{k+1}{2}+k$ edges are associated%
    \COMMENT{We get the extra $+k$ since $v$ is a neighbour of each vertex in $Acc(a_{1,1})$ and each vertex in $Acc(b_{1,2})$ and so we don't count these edges twice}
with $v$. Since there are $2j-1$ junction edges, altogether this shows that
\begin{equation}\label{eq:edgesabs}
e(A_{j,\ell,k})= e(P^k)+2j\binom{k+1}{2}+k.
\end{equation}


\section{Proof of Theorem~\ref{thm:main}} \label{sec:mainproof}

Since the property of containing the $k$th power of a Hamilton cycle is monotone it suffices to show that a.a.s.~$G_{n,p^*}$ contains
the $k$th power of a Hamilton cycle, where
$$p^*=p^*(n):= n^{-1/k + \eps_*}.$$
Here $\eps_*$ is fixed and we assume that
\begin{equation}\label{eq:eps*}
\eps_*\le \frac{1}{10^4 k}.
\end{equation}
So in particular $p^*=o(1)$. We shall consider a multiple round exposure of $G_{n,p^*}$. More precisely,  
we will expose $G_{n,p^*}$ in four rounds considering four independent random graphs $G_{n,p^*_1},\dots, G_{n,p^*_4}$, where $p^*_1=\dots=p^*_4$.
Thus $p^*_i =(1+o(1))p^*/4\ge n^{-1/k + \eps_*/2}$ for all $i=1,\dots,4$. 

Roughly speaking, the strategy of our proof is as follows. We will first use $G_{n,p^*_1}$ to find a collection $\mathcal{A}$ of pairwise disjoint
absorbers which cover about $n/3$  vertices. Let $A$ denote the set consisting of all absorbtion vertices of all these absorbers.
We use $G_{n,p^*_2}$ to connect the $k$th powers of the spines of the absorbers in $\A$ into the $k$th power $Q_\A$ of a path.
To do this we will only use vertices which are not covered by the absorbers in~$\A$.
Moreover, $V(Q_\A)\cup A$ will contain at most $2n/3$ vertices.
Let $S':=[n]\setminus (V(Q_\A)\cup A)$ denote the set of uncovered vertices.
We will use $G_{n,p^*_3}$ to cover $S'$ by a collection $\mathcal{P}$ consisting of not too many $k$th powers of pairwise disjoint paths.
Finally, we will use $G_{n,p^*_4}$ connect all the paths in $\mathcal{P}$ as well as $Q_{\A}$ into the $k$th power $C^*$ of a cycle.
To do this we will only use vertices in $A$. Let $A''\subseteq A$ be the vertices not used in this step. Since $A''$ consists of
absorbtion vertices, we can `absorb' all the vertices of $A''$ into $C^*$ to obtain the $k$th power of a Hamilton cycle.
More precisely, for each $v\in A''$ let
$A_v$ denote the unique absorber in $\A$ that contains~$v$. Then the subgraph obtained from $C^*$ by replacing the $k$th power of the spine
of $A_v$ with the $k$th power of the augmenting path of $A_v$ (for each $v\in A''$) is the $k$th power of a Hamilton cycle in~$G_{n,p^*}$.

After outlining our strategy, let us now return to the actual proof. We will use the next lemma (which is proved in Section~\ref{sec:Absorber_Factor})
in order to find the collection $\A$ of absorbers in $G_{n,p^*_1}$. 

\begin{lemma} \label{lem:Absorber_Factor} 
For each $\eps >0$ and each integer $k\geq 2$, there exist integers $j\ge 3$ and $\ell_0 \geq 2k$ such that
whenever $\ell\ge \ell_0$ and $p=p(n)\ge n^{-1/k + \eps}$, then a.a.s. $G_{n,p}$ contains an $A_{j,\ell,k}$-factor.
\end{lemma}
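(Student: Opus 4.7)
The plan is to derive Lemma~\ref{lem:Absorber_Factor} from the Johansson--Kahn--Vu theorem (Theorem~\ref{thm:JKV}) applied with $F = A_{j,\ell,k}$. That theorem guarantees an $F$-factor in $G_{n,p}$ a.a.s.\ whenever $p$ is above $n^{-1/d_1(F)}$ by (at most) a polylogarithmic factor, where $d_1(F) := \max\{e(H)/(v(H)-1) : H \subseteq F,\ v(H) \ge 2\}$ is the $1$-density. Since $p \ge n^{-1/k + \eps}$, it is enough to choose $j \ge 3$ and $\ell_0 \ge 2k$ so that for every $\ell \ge \ell_0$ we have $d_1(A_{j,\ell,k}) \le k/(1 - k\eps/2)$; equivalently, $d_1(A_{j,\ell,k}) \le k + \eta$ for some $\eta = \eta(k, \eps) > 0$.

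The density of the whole absorber is easy to read off. Using~\eqref{eq:edgesabs} together with the standard identity $e(P^k) = ks - \binom{k+1}{2}$, and setting $s := j(2\ell+4) + \ell = v(A_{j,\ell,k}) - 1$, we obtain
\[
\frac{e(A_{j,\ell,k})}{v(A_{j,\ell,k}) - 1} = k + \frac{(2j-1)\binom{k+1}{2} + k}{s},
\]
which is at most $k + \eta$ provided $\ell$ is large in terms of $j, k, \eps$. For an arbitrary subgraph $H \subseteq A_{j,\ell,k}$ with $v(H) \ge 2$, I would prove the global bound $e(H) \le k(v(H) - 1) + 2j\binom{k+1}{2} + k$ by the following ordering argument: order $V(A_{j,\ell,k})$ along $P$ with $v$ last. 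Each vertex has at most $k$ earlier neighbours within $P^k$ (at most $k$ vertices precede it on $P$ within distance $k$), while the remaining edges of $A_{j,\ell,k}$ are the at most $2j\binom{k+1}{2} + k$ extra edges (junction edges, their associated edges, and edges incident to $v$). This yields $d_1(H) \le k + \eta$ whenever $v(H) \ge m_0$ for some threshold $m_0 = m_0(k, j, \eta)$.

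The main obstacle is to control $d_1(H)$ for small subgraphs with $v(H) < m_0$, where the global bound is too weak. Here I would exploit the cluster structure of the extra edges: since $\ell \ge 2k$, the extras split into $2j-1$ pairwise (essentially) disjoint junction clusters supported on $\mathrm{Acc}(a) \cup \mathrm{Acc}(b)$ (one per junction edge $ab$), plus one $v$-cluster on $\mathrm{Acc}(a_{1,1}) \cup \{v\} \cup \mathrm{Acc}(b_{1,2})$. A direct inspection shows each cluster is isomorphic to $P_{2k}^k$ (respectively $P_{2k+1}^k$): the cluster vertices sit consecutively on $P'$, and the edges of $A_{j,\ell,k}$ between them are precisely those of the $k$th power of the corresponding $P'$-subpath; in particular each cluster has $1$-density at most $k$. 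Since a small $H$ meets only $O_{m_0}(1)$ clusters (each contributing at least one vertex) and lies in at most that many spine intervals, one can bound $e(H)$ piecewise---applying $e(H') \le k(v(H') - 1)$ on each cluster piece and on each spine piece, and using that distinct clusters are separated on $P$ by at least $\ell - 2k$ vertices---to obtain $d_1(H) \le k + O_{j,k}(1/\ell)$ uniformly in $H$. Choosing $\ell_0$ large in terms of $k, j, \eps$ then forces $d_1(A_{j,\ell,k}) \le k + \eta < k/(1 - k\eps/2)$, and Theorem~\ref{thm:JKV} applied with $F = A_{j,\ell,k}$ delivers the required $A_{j,\ell,k}$-factor in $G_{n,p}$ a.a.s.
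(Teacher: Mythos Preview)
Your overall strategy---reduce to Theorem~\ref{thm:JKV} via a bound $d_1^{\max}(A_{j,\ell,k}) \le k + \eta$---is exactly the paper's, and your global bound $e(H) \le k(v(H)-1) + 2j\binom{k+1}{2} + k$ correctly handles large $H$ (this is essentially the paper's Case~1). The gap is entirely in your treatment of small $H$.

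Your claim that ``distinct clusters are separated on $P$ by at least $\ell - 2k$ vertices'' is false: the access sets $\mathrm{Acc}(a_{i,1})$ and $\mathrm{Acc}(a_{i,2})$ form $2k$ \emph{consecutive} vertices on the spine $P$, yet $a_{i,1}$ and $a_{i,2}$ lie on different junction edges, so these two access sets belong to different clusters. Consequently your piecewise argument cannot yield $d_1(H) \le k + O_{j,k}(1/\ell)$. Concretely, let $H$ be the subgraph of $A_{j,\ell,k}$ induced on $\{v\}$ together with all $4j$ access sets. Then $v_H = 4jk + 1$, and counting (the $2j$ copies of $P_{2k}^k$ on the spine at the junction pairs contribute $jk(3k-1)$ edges, and all $2j\binom{k+1}{2}+k$ extra edges from~\eqref{eq:edgesabs} are present) gives $e_H = 4jk^2 + k$, hence $d_1(H) = k + 1/(4j)$, \emph{independently of $\ell$}. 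So taking $\ell \to \infty$ with $j$ fixed (say $j = 3$) never pushes $d_1^{\max}$ below $k + 1/12$.

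What your argument misses is twofold. First, $j$ must itself be chosen large in terms of $\eps$; the paper takes $j \ge k/\delta + 3$. Second, handling small $H$ requires the \emph{cycle structure} of the junction edges (property~(A3)) rather than separation on $P$. The paper orients the edges of $H$ using the directed cycle $D$ formed by the junction edges, the path $a_{1,1}vb_{1,2}$, and the spine edges $a_{i,1}a_{i,2}$, $b_{i,1}b_{i,2}$; this orientation gives every vertex out-degree at most $k$. Then either (Case~2b) $H$ contains an edge associated with every arc of $D$, which forces $v_H \ge 2j$ and hence $d_1(H) \le k + k/(2j-1) \le k+\delta$; or (Cases~2a, 2c) some arc is unused, and a colouring argument on the contracted cycle $D'$ locates a vertex of out-degree strictly less than $k$, giving $e_H \le k(v_H - 1)$ and $d_1(H) \le k$. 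Your piecewise decomposition does not capture this dichotomy.
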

Let $j=j(k,\eps_*/2)$ and $\ell_0=\ell_0(k,\eps_*/2)$ be as in Lemma~\ref{lem:Absorber_Factor}.
Set
\begin{equation}\label{eq:ell}
\ell:=\max\{ \ell_0, \lceil 1/\eps_*^2\rceil\}.
\end{equation}
Then Lemma~\ref{lem:Absorber_Factor} implies that a.a.s.~$G_{n,p^*_1}$ contains an $A_{j,\ell,k}$-factor.
So we may assume that such a factor exists.
Let $s:=j(2\ell+4)+\ell$ and note that $|A_{j,\ell,k}|=s+1$.
Let $\A$ be a collection of $n/(3(s+1))$ copies of $A_{j,\ell,k}$ in this $A_{j,\ell,k}$-factor and let $A$ denote the set of absorbtion vertices
in all these copies. 
(So the assertion of Lemma~\ref{lem:Absorber_Factor} is far stronger than we need it to be -- see the discussion after Theorem~\ref{thm:JKV}.)
Note that the absorbers in $\A$ cover $n/3$ vertices of $G_{n,p^*}$.
Let $S$ be a set of $n/3$ vertices not covered by these absorbers. As indicated above, our next aim is to use $G_{n,p^*_2}$
in order connect the absorbers in~$\A$, using some of the vertices in $S$. To do this, we will use the following lemma (which we prove in
Section~\ref{sec:linking}).

\begin{lemma}\label{lem:linking}
Suppose that $k\ge 2$, that $0<\eps<1/k$, that $p=p(n) \ge n^{-1/k+\eps}$ with $p(n)=o(1)$ and that $f \le \eps n/(60 k)$.
For each $i=1,\dots,f$ let $A_i$ and $B_i$ be sequences, each consisting of $k$ vertices in $[n]$, such that these $2f$ sequences
are pairwise disjoint. Then a.a.s.~$G_{n,p}$ contains pairwise disjoint $(A_i,B_i)$-linkages $R_i$ with $|R_i| \le \lceil 30k/\eps \rceil$
(for all $i=1,\dots,f$).
\end{lemma}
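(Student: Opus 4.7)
Set $t := \lceil 30k/\eps \rceil$ and $m := t-2k$. The plan is to construct the linkages $R_1,\ldots,R_f$ greedily, processing the pairs in order $i=1,\ldots,f$. At step $i$, let $F_i := \bigcup_{j\ne i}(A_j\cup B_j) \cup \bigcup_{j<i}V(R_j)$. The constraints $f\le\eps n/(60k)$ and $t\le 30k/\eps+1$ give $|F_i| \le 2fk+ft \le \eps n/30 + n/2 \le 2n/3$, so the available set $U_i:=[n]\setminus F_i$ has size at least $n/3$ and contains $A_i\cup B_i$. It therefore suffices to find, in $G_{n,p}$, an $(A_i,B_i)$-linkage of length exactly $t$ with interior in $U_i\setminus(A_i\cup B_i)$, with failure probability $o(1/n)$; a union bound over the $f\le n$ steps then completes the proof.

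To construct such a linkage with the required concentration, I would use a two-round exposure: decompose $G_{n,p}=G_1\cup G_2$ into two independent copies of $G_{n,p'}$ with $p'\ge n^{-1/k+\eps/2}$ (valid for large $n$ since $p=o(1)$). First, starting from $A_i$, greedily extend by $s:=\lceil m/2\rceil$ interior vertices, choosing at each step a vertex in $U_i$ that is a common $G_1$-neighbour of the previous $k$ vertices of the extension and has not yet been used. Chernoff applied to the common neighbourhoods in $G_1$ (each of mean $\Theta(|U_i|(p')^k)=\Theta(n^{k\eps/2})$) shows that these extensions succeed, and that the number of distinct final $k$-sequences $A'_i$ reachable is polynomially large in $n$. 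A symmetric backward extension from $B_i$ yields a polynomially large family of endsequences $B'_i$.

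In the second round, for any compatible pair $(A'_i,B'_i)$ produced above, completing a full $(A_i,B_i)$-linkage requires a specific set of $\binom{k+1}{2}$ ``bridge'' edges in $G_2$ between $A'_i$ and $B'_i$, plus possibly a short middle segment of interior vertices if $2s<m$. A second-moment (or Janson) argument applied to the count of bridgeable pairs within the candidate pool, using the independence of $G_2$ from $G_1$ and the polynomially large pool size, then gives the existence of at least one such bridge with failure probability $\exp(-n^{\Omega(1)})=o(1/n)$.

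The main technical obstacle is this last second-moment bound: the events that distinct candidate pairs are bridgeable are correlated whenever the candidates share vertices, so one must exploit the typicality of the endsequences produced in the first round (for instance, that the vertices in each $A'_i$ are spread within $U_i$ and that their joint common neighbourhoods in $G_2$ behave typically) to keep the overlap counts tractable. A naive second-moment treatment of the count of \emph{all} $(A_i,B_i)$-linkages in $G_{n,p}$, without this two-round splitting, is insufficient in the regime $\eps<1/k$: pairs of linkages sharing many interior vertices make $\ex[X^2]/\ex[X]^2$ grow polynomially in $n$, preventing concentration.
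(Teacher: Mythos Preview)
Your greedy one-pair-at-a-time strategy has a genuine gap: the adaptive dependency between steps. The available set $U_i$ is determined by the linkages $R_1,\dots,R_{i-1}$, which are themselves functions of $G_{n,p}$ (or of your $G_1,G_2$). So even if for every \emph{fixed} $U$ of size $\ge n/3$ an $(A_i,B_i)$-linkage with interior in $U$ exists with probability $1-\exp(-n^{\Omega(1)})$, you cannot simply union-bound over the $f$ steps: after conditioning on the history, the interior--interior edges of your candidate linkages are no longer fresh Bernoulli$(p')$ variables. A single two-round split does not help, since both rounds are reused across all steps. Union-bounding instead over all possible histories means beating roughly $n^{ft}$ choices, and since $ft$ can be of order $n$ while your Janson exponent is only $n^{c}$ with $c<1$, this fails. (The only edges guaranteed to be fresh at step $i$ are those incident to $A_i\cup B_i$; but a linkage long enough to have $1$-density close to $k$ necessarily contains many interior--interior edges.) You also flag the correlation control in the bridging step as ``the main technical obstacle'' without resolving it, so the proposal is only a sketch at its key point.

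The paper avoids both issues by a different organisation. It decomposes $G_{n,p}$ into $2\log^2 n$ independent rounds, so that each round is genuinely fresh regardless of earlier choices, and it never attempts to link a single pair with probability $1-o(1/n)$. Instead it proves a \emph{bulk} linking lemma (Lemma~\ref{partial}): in one fresh round, a Chebyshev-based second-moment argument on a suitable partite multigraph (Lemmas~\ref{DGbound} and~\ref{Phibound}, in the spirit of Kreuter) produces $cf$ pairwise disjoint linkages with probability $1-O(1/\log^2 n)$. Iterating for $O(\log n)$ rounds reduces the unlinked pairs to at most $\log^2 n$, and these are then handled one per fresh round via Lemma~\ref{lem:singlelink}, which only needs failure probability $O(1/\log^3 n)$. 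The constant-fraction-per-round device is exactly what allows the paper to get by with polylogarithmically many exposures and polylogarithmic error bounds at each stage, sidestepping the adaptivity problem entirely.
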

Choose an order of the absorbers in~$\A$. For each $i=1,\dots,|\A|-1$ let $A_i$ denote the final endsequence of the $i$th absorber in~$\A$
and let $B_i$ be the initial endsequence of the $(i+1)$st absorber in~$\A$.  
Let $S^*$ denote the union of $S$ together with all the vertices contained in one of these endsequences $A_i$ or $B_i$.
Note that $$
|\A|=\frac{n}{3(s+1)}= \frac{|S|}{s+1}\le \frac{|S|}{\ell}\stackrel{(\ref{eq:ell})}{\le} \eps_*^2 |S|\stackrel{(\ref{eq:eps*})}{\le} \frac{\eps_* |S|}{180k} \le  \frac{\eps_* |S^*|}{180k}
$$
and $p^*_2\ge n^{-1/k+\eps_*/2}\ge |S^*|^{-1/k+\eps_*/3}$. So
we may apply Lemma~\ref{lem:linking} (with $\eps_*/3$ playing the role of $\eps$) to see that a.a.s.~the random subgraph of $G_{n,p^*_2}$ induced by $S^*$
contains pairwise disjoint $(A_i,B_i)$-linkages~$R_i$ with $|R_i| \le \lceil 90k/\eps_* \rceil$ for all $i=1,\dots,|\A|-1$. 
So we may assume that such linkages exist. Let $Q_{\A}$ be the union of $R_1,\dots,R_{|\A|-1}$ and of the $k$th powers of the spines of all absorbers in $\A$.
Then $Q_{\A}$ is the $k$th power of a path whose initial endsequence is the initial endsequence of the first absorber in $\A$ and whose
final endsequence is the final endsequence of the last absorber in $\A$. Moreover, $Q_{\A}$ avoids the set $A$ of absorbtion vertices.

Let $S':=[n]\setminus (V(Q_\A)\cup A)$ be the set of uncovered vertices. Thus $|S'|\ge n/3$. Our next aim is to cover $S'$ with
not too many $k$th powers of paths. 
To simplify this step, first let $t:=|S'| \mod s^2$. 
Now remove $s^2-t$ vertices from $A$ and call the resulting set $A'$. 
Add these $s^2-t$ vertices to $S'$ and call the resulting set $S''$. So $|S''|$ is divisible by $s^2$.

The next lemma (which will be proved in Section~\ref{sec:pathfactor}) implies
that a.a.s.~the random subgraph of $G_{n,p^*_3}$ induced by $S''$ contains a $P^k_{s^2}$-factor~$\mathcal{P}$.
So we may assume that such a factor exists.

\begin{lemma}\label{lem:pathfactor} 
Suppose that $\eps>0$, that $k,r\ge 2$ and that $p=p(n) \ge n^{-1/k+\eps}$. Then a.a.s.~$G_{n,p}$ has a $P_r^k$-factor. 
\end{lemma}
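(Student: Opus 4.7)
The plan is to deduce the lemma directly from the Johansson--Kahn--Vu factor theorem (Theorem~\ref{thm:JKV}), much as in the proof of Lemma~\ref{lem:Absorber_Factor}. Since $k$ and $r$ are fixed constants, it suffices to verify that the maximum $1$-density
\[
d_1^*(P_r^k) := \max\left\{\frac{e(H)}{v(H)-1} : H \subseteq P_r^k,\ v(H)\ge 2\right\}
\]
is strictly less than $k$. Once this is established, the JKV threshold for a $P_r^k$-factor takes the form $n^{-1/d_1^*(P_r^k)}(\log n)^{O(1)}$, which is smaller than $p = n^{-1/k+\eps}$ by at least a polynomial factor in~$n$, so Theorem~\ref{thm:JKV} delivers the desired factor a.a.s.

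The density computation is essentially routine. The key observation is that for any vertex set $W \subseteq V(P_r^k)$ of size $m \ge 2$, the number of edges of $P_r^k$ spanned by $W$ is maximised when $W$ consists of $m$ consecutive vertices of $P_r$. Indeed, edges of $P_r^k$ correspond precisely to pairs of vertices at distance at most $k$ in~$P_r$, so rearranging $W$ to be more spread out along $P_r$ can only destroy such pairs. Hence it suffices to bound $d_1(P_m^k)$ for $2 \le m \le r$. A short case split (separating $m \le k+1$, where $P_m^k = K_m$ and $d_1(P_m^k) = m/2 \le (k+1)/2 \le k$, from $m \ge k+2$, where a direct edge-count gives $d_1(P_m^k) = k - \binom{k}{2}/(m-1) < k$) then yields $d_1(P_m^k) < k$ in both cases, and thus $d_1^*(P_r^k) < k$.

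The only mildly delicate step is the ``interval is densest'' claim. I would handle this by a simple swap argument: if $W$ is not already an interval, take its outermost vertex $w$ (the one whose distance in $P_r$ to the rest of $W$ is largest) and move $w$ one position inward toward the remaining vertices of $W$. This operation can only shorten pairwise distances in $P_r$, hence cannot destroy any edge of $P_r^k[W]$. Iterating collapses $W$ to a consecutive interval without decreasing the edge count, proving the claim. This is really the only substantive step in the argument; once it is in place, the lemma reduces to a direct application of Theorem~\ref{thm:JKV}.
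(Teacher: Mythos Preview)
Your overall approach matches the paper's: both reduce to the Johansson--Kahn--Vu theorem (Theorem~\ref{thm:JKV}) by bounding $d_1^{\max}(P_r^k)$. The difference is in how that bound is obtained, and here your argument is both more elaborate than needed and contains a gap.

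The paper proves $d_1^{\max}(P_r^k)\le k$ in one line (Proposition~\ref{lem:Balanced}): for any $H\subseteq P_r^k$ with $v_H\ge 2$, order the vertices of $H$ by their position on $P_r$; then every vertex has at most $k$ neighbours among its predecessors, so $e_H\le k(v_H-1)$ and $d_1(H)\le k$. Since the hypothesis already carries the extra $\eps$, the weak inequality $d_1^{\max}\le k$ is all that Theorem~\ref{thm:JKV} needs; your strict inequality is correct but unnecessary.

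Your swap argument for ``intervals are densest'' does not work as stated. Consider $W=\{1,2,5,6\}$: every vertex of $W$ is at distance $1$ from the rest of $W$, so whichever vertex you call ``outermost'', moving it one step inward lands on an occupied position. The underlying claim is true, but the clean justification is different: if $W=\{w_1<\dots<w_m\}$ then $w_j-w_i\ge j-i$ for all $i<j$, so every pair in $W$ at $P_r$-distance at most $k$ maps to a pair in $\{1,\dots,m\}$ at distance at most $k$; hence the interval has at least as many edges. With this fix your proof goes through, but note that the paper's direct ordering argument bypasses the reduction to intervals altogether and is considerably shorter.
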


Since $|S''|$ is divisible by $s^2$, all the vertices in $S''$ are covered by~$\mathcal{P}$.
We will now use $G_{n,p^*_4}$ to connect all the copies of $P^k_{s^2}$ in~$\mathcal{P}$ as well as $Q_\A$ into the $k$th power of
a cycle, using some of the vertices in~$A'$. To do this, we
choose an order of the copies of $P^k_{s^2}$ in~$\mathcal{P}$. For each $i=1,\dots,|\mathcal{P}|-1$ let $A'_i$ denote the final endsequence of the $i$th
copy of $P^k_{s^2}$ in~$\mathcal{P}$ and let $B'_i$ be the initial endsequence of the $(i+1)$st copy.
Let $A'_{|\mathcal{P}|}$ denote the final  endsequence of the last copy of $P^k_{s^2}$ in~$\mathcal{P}$ and let $B'_{|\mathcal{P}|}$ denote the initial endsequence
of~$Q_{\A}$. Finally, let $A'_{|\mathcal{P}|+1}$ denote the final endsequence of $Q_{\A}$ and let $B'_{|\mathcal{P}|+1}$ denote the initial endsequence
of the first copy of $P^k_{s^2}$ in~$\mathcal{P}$.
Let $A^*$ denote the union of $A'$ together with all the vertices contained in one of the endsequences $A'_i$ or $B'_i$ with $i=1,\dots,|\mathcal{P}|+1$.
Recall that $|A|=|\A|=n/(3(s+1))$ and so $|A^*|\ge |A'| \ge |A|-s^2 \ge n/(4(s+1))$. Moreover, $s\ge \ell$. Thus%
    \COMMENT{$\frac{n}{s^2}\le \frac{\eps_*n}{720k(s+1)}$ holds if $1440k/\eps_*\le s$. But $1440k/\eps_*\le 1/\eps^2_*$ by
(\ref{eq:eps*}) and $s\ge \ell\ge 1/\eps_*^2$ by (\ref{eq:ell}).}
$$
|\mathcal{P}|+1=\frac{|S''|}{s^2}+1\le \frac{n}{s^2}\stackrel{(\ref{eq:eps*}),(\ref{eq:ell})}{\le}
\frac{\eps_* n}{720k(s+1)}  \le \frac{\eps_* |A^*|}{180k}.
$$
Moreover, $p^*_4\ge n^{-1/k+\eps_*/2}\ge |A^*|^{-1/k+\eps_*/3}$. 
So
we may apply Lemma~\ref{lem:linking} (with $\eps_*/3$ playing the role of $\eps$) to see that a.a.s.~the random subgraph of $G_{n,p^*_4}$ induced by $A^*$
contains pairwise disjoint $(A'_i,B'_i)$-linkages~$R'_i$ with $|R'_i|\le \lceil 90k/\eps_* \rceil$ for all $i=1,\dots,|\mathcal{P}|+1$. So we may assume that such linkages exist. 
Thus the union of $C^*$ of all these linkages $R'_i$, of all the copies of $P^k_{s^2}$ in $\mathcal{P}$ and of $Q_{\A}$ forms the $k$th power
of a cycle which covers all vertices apart from some vertices in $A'$. 

Let $A''\subseteq A' \subseteq  A$ denote the set of all uncovered vertices.
For each $v\in A''$, let $A_v\in \A$ denote the unique absorber containing~$v$. Let $P_v$ denote the spine of $A_v$ and let $P'_v$ denote its
augmented path. Note that $C^*$ contains the $k$th power $P^k_v$ of $P_v$ as
a subgraph. But the $k$th power $(P'_v)^k$ of $P'_v$ has the same endsequences as $P^k_v$.
Thus the graph obtained from $C^*$ by replacing $P^k_v$ with $(P'_v)^k$ for each $v\in A''$ is the $k$th power of a Hamilton cycle
in $G_{n,p^*}$. (Note that our construction implies that a.a.s.~$G_{n,p*}$ contains $C^*$ as well as $(P'_v)^k$ for every $v\in A$.)


\section{Finding a factor of $k$th powers of paths: Proof of Lemma~\ref{lem:pathfactor}}\label{sec:pathfactor}

The \emph{$1$-density} of a graph $H$ on at least two vertices 
is defined to be 
$$ d_1 (H) := {e_H \over v_H -1},$$
where $e_H$ and $v_H$ denote the number of edges and the number of vertices of $H$. Let 
$$d_1^{\rm max} (H):=\max_{H'\subseteq H, \ v_{H'}\ge 2} d_1(H').$$
Lemma~\ref{lem:pathfactor} will be an easy consequence of the following deep result of Johansson, 
Kahn and Vu~\cite{JKV08}, which was already mentioned in the introduction.
\begin{theorem}[Theorem~2.2~\cite{JKV08}] \label{thm:JKV}
Fix $\varepsilon >0$ and a graph $H$. Suppose that
$p(n) \ge n^{-1/d_1^{\rm max}(H) + \varepsilon}$. Then a.a.s. $G_{n,p}$ contains an $H$-factor. 
\end{theorem}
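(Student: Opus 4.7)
The plan is to deduce Lemma~\ref{lem:pathfactor} as an immediate consequence of Theorem~\ref{thm:JKV} applied with $H := P_r^k$. Since $k$ and $r$ are fixed constants, it will suffice to verify that $d_1^{\rm max}(P_r^k)\le k$: once this is known, $-1/d_1^{\rm max}(P_r^k)\le -1/k$, so the hypothesis $p\ge n^{-1/k+\eps}$ already guarantees $p\ge n^{-1/d_1^{\rm max}(P_r^k)+\eps}$, and Theorem~\ref{thm:JKV} delivers the desired $P_r^k$-factor a.a.s.

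The substance of the argument is therefore just the density bound. My approach is to first reduce to induced subgraphs on \emph{consecutive} vertices of the underlying path, and then carry out an offset-by-offset edge count. For the reduction: any subgraph is dominated in density by the induced subgraph on the same vertex set, so I may consider an induced subgraph $H'$ whose vertices have path-indices $i_1<\cdots<i_s$; relabelling these indices as $1,\dots,s$ only shrinks the gaps $i_{t'}-i_t$, and shrinking gaps can only create new edges within distance $k$ in $P_r^k$. Hence $e(H')\le e(P_s^k)$, and $d_1(H')\le d_1(P_s^k)$. For $s\le k+1$ we have $P_s^k = K_s$, giving $d_1(K_s)=s/2 \le (k+1)/2 \le k$; for $s>k+1$ a short count (edges at offset $d$ number $s-d$ for $d=1,\dots,k$) yields
\[
d_1(P_s^k) \;=\; \frac{ks - \binom{k+1}{2}}{s-1} \;=\; k - \frac{\binom{k}{2}}{s-1} \;<\; k.
\]
In either case $d_1^{\rm max}(P_r^k)\le k$, and the plan goes through.

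There is essentially no obstacle: once Theorem~\ref{thm:JKV} is invoked as a black box, the entire argument reduces to the short density computation sketched above, and no additional probabilistic work is required. The only mild point worth noting is that the bound $d_1^{\rm max}(P_r^k)\le k$ must hold with the \emph{same} $\eps$ as in the hypothesis (so that no slack is lost in transferring to Theorem~\ref{thm:JKV}), and this is precisely why verifying $d_1^{\rm max}(P_r^k)\le k$ rather than merely $d_1^{\rm max}(P_r^k)<\infty$ is the right thing to do.
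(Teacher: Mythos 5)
You were asked about Theorem~\ref{thm:JKV}, which is a deep result of Johansson, Kahn and Vu that this paper cites from~\cite{JKV08} and uses as a black box; the paper contains no proof of it. Your proposal does not attempt to prove it either: your opening sentence takes Theorem~\ref{thm:JKV} as given and deduces Lemma~\ref{lem:pathfactor} from it. So the proposal addresses a different statement from the one assigned.

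For what it is worth, your derivation of Lemma~\ref{lem:pathfactor} is correct and at the strategic level mirrors the paper's: both reduce to checking $d_1^{\rm max}(P_r^k)\le k$ and then apply Theorem~\ref{thm:JKV}. Where you diverge is in the density calculation. The paper's Proposition~\ref{lem:Balanced} simply orders the vertices of an arbitrary $H\subseteq P_r^k$ by their position on the path, notes that each vertex has at most $k$ back-neighbours among the earlier ones, and reads off $e_H\le k(v_H-1)$ --- no reduction to consecutive intervals and no case split. Your version first reduces to consecutive intervals via the (valid) observation that shrinking index gaps only adds edges, and then carries out an explicit offset count, splitting on whether $s\le k+1$. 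Both computations check out, and your remark that $d_1^{\rm max}(P_r^k)\le k$ (rather than merely $<\infty$) is exactly what is needed so that $p\ge n^{-1/k+\eps}$ implies the hypothesis of Theorem~\ref{thm:JKV} is the right observation. But the paper's one-line ordering argument subsumes both of your steps, and in any case none of this constitutes a proof of Theorem~\ref{thm:JKV} itself.
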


Thus in order to prove Lemma~\ref{lem:pathfactor}, it suffices prove the following proposition.
\begin{proposition}\label{lem:Balanced} Let $k, r\ge 2$ be integers.
Then $d_1^{\rm max} (P^k_r)\le k$.
\end{proposition}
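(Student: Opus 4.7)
The plan is to reduce the claim to computing $1$-densities of $P_m^k$ itself, by showing that among subgraphs of $P_r^k$ on a given number of vertices, the contiguous subpath power maximizes the edge count. After that, a direct edge count of $P_m^k$ finishes the proposition.

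First I would fix a subgraph $H' \subseteq P_r^k$ with $m = v_{H'} \ge 2$, and let $V(H') = \{v_1 < v_2 < \cdots < v_m\} \subseteq \{1,\dots,r\}$. The key monotonicity observation is that if $\{v_i,v_j\}$ is an edge of $P_r^k$, then $|i-j| \le |v_i - v_j| \le k$, so after relabeling $v_i \mapsto i$ the edge survives in $P_m^k$. Hence
\[
e_{H'} \;\le\; e(P_r^k[V(H')]) \;\le\; e(P_m^k),
\]
so it suffices to prove $d_1(P_m^k) \le k$ for every $m \ge 2$.

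Next I would count edges in $P_m^k$. For $m \ge k+1$, the number of pairs at distance exactly $d$ along the path is $m-d$ for $d = 1,\dots,k$, so
\[
e(P_m^k) \;=\; \sum_{d=1}^{k}(m-d) \;=\; km - \binom{k+1}{2}.
\]
For $m \le k+1$, $P_m^k = K_m$ and $e(P_m^k) = \binom{m}{2}$. In the first case, $d_1(P_m^k) \le k$ reduces to $\binom{k+1}{2} \ge k$, which holds for all $k \ge 1$. In the second case, $d_1(P_m^k) = m/2 \le (k+1)/2 \le k$ since $k \ge 2$. Combining with the first step gives $d_1(H') \le k$ for every $H' \subseteq P_r^k$ with $v_{H'} \ge 2$, as required.

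The only step that takes a moment of thought is the monotonicity reduction; the edge count and the two inequalities are routine arithmetic. I would present the proof in this order (reduction, then the two-case edge count) so that the single small idea stands out cleanly.
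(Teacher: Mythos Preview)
Your proof is correct, but it takes a different route from the paper's. The paper argues via degeneracy: order the vertices of any $H\subseteq P_r^k$ along the path as $x_1,\dots,x_{v_H}$; then each $x_i$ with $i\ge 2$ has at most $k$ neighbours among $x_1,\dots,x_{i-1}$, so $e_H\le k(v_H-1)$ and $d_1(H)\le k$ in one line. Your argument instead compresses $V(H')$ to a contiguous interval (using $|i-j|\le |v_i-v_j|$) to reduce to $H'=P_m^k$, and then computes $e(P_m^k)$ explicitly in two cases. Both are elementary; the paper's version is shorter and immediately generalizes to any $k$-degenerate graph, while your version has the mild bonus of producing the exact edge count $e(P_m^k)=km-\binom{k+1}{2}$ for $m\ge k+1$, at the cost of a small case split.
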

\proof
Consider any $H\subseteq P^k_r$ on $v_H\ge 2$ vertices.
Thus there is an ordering $x_1,\dots,x_{v_H}$ of the vertices of
$H$ such that for all $i=2,\dots,v_H$ every $x_i$ has at most $k$ neighbours amongst $x_1,\dots,x_{i-1}$.
Since $d_1(H[\{x_1,x_2\}])\le 2\le k$, it follows that $d_1(H)\le k$.
\endproof

It seems likely that our use of Theorem~\ref{thm:JKV} is not essential and our arguments can be extended to avoid its use. 
Indeed, first note that we only use Theorem~\ref{thm:JKV} to prove Lemmas~\ref{lem:Absorber_Factor} and~\ref{lem:pathfactor}.
As mentioned earlier, instead of Lemma~\ref{lem:Absorber_Factor}, we only need an assertion which guarantees a linear
number of disjoint absorbers. Such an assertion can be deduced from Lemma~\ref{lem:Absorber_1_Density} and a `non-partite' version of Lemma~\ref{DGbound}.
Moreover, instead of the factor covering all vertices of $S''$ guaranteed by Lemma~\ref{lem:pathfactor}, one can use this version repeatedly to cover almost all the vertices
of $S''$. The strategy would then be to use  Lemmas~\ref{lem:linking} and~\ref{DGbound} to cover the remaining vertices of $S''$ by powers of paths which are also allowed to use some vertices in $A$.
But relying on Theorem~\ref{thm:JKV} makes these steps unnecessary.%
\COMMENT{Note that if the $1$-density of a graph $H$ is close to $k$, then our bound on $p$ implies that $\Phi_H^v$ is linear.
So we can apply the original result of Kreuter to get linearly many disjoint absorbers. Similarly, later on we
can get a linear number of copies of $P^k_r$, where $r=s^2$. In fact we can repeatedly apply this result with independent copies of $G_{n,p}$
to cover all but $\eps' n$ vertices with $P^k_r$ (as in the current proof, we avoid the absorbtion vertices etc in this step).
Let $T$ be the set of these $\eps' n$ vertices. 
Using the results of the last section, find $\eps' n$ $k$-cliques each with one vertex in $T$ (so $T$ plays the role of $V_0$,
note this works as the $1$-density of a $k$-clique is less than  $k$). Now label each $k$-clique as an $k$-sequence $A_i$ and
a $k$-sequence $B_i$ and link up all these $A_i$ and $B_i$ via Lemma~\ref{lem:linking} using absorption vertices. Alternatively one can use short
paths with one endvertex in $T$ rather than cliques. In both approaches, one needs that $\eps 'n$ is much smaller than the number
of absorbtion vertices, which is ok}


\section{Finding a factor of absorbers: Proof of Lemma~\ref{lem:Absorber_Factor}}\label{sec:Absorber_Factor}

The aim of this section is to show that there are integers $j\ge 3$ and $\ell\ge 2k$ such that the 1-density of any subgraph of $A_{j,\ell,k}$
is not much larger than $k$ (see Lemma~\ref{lem:Absorber_1_Density} below).
Together with Theorem~\ref{thm:JKV} this immediately implies Lemma~\ref{lem:Absorber_Factor}.

\begin{lemma} \label{lem:Absorber_1_Density} 
For every $k\geq 2$ and every $\delta > 0$, there exist integers $j\geq 3$ and $\ell_0 \geq 2k$ such that
whenever $\ell\ge \ell_0$ every subgraph $H$ of $A_{j,\ell,k}$ satisfies $d_1(H) \leq k + \delta$.
\end{lemma}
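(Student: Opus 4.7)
For any subgraph $H \subseteq A_{j,\ell,k}$, decompose $e_H = e_H^P + e_H^*$, where $e_H^P := e(H \cap P^k)$ counts the $P^k$-edges and $e_H^* := e_H - e_H^P$ counts the ``extras'' (edges of $A_{j,\ell,k} \setminus P^k$). Proposition~\ref{lem:Balanced} applied to $H\cap P^k\subseteq P^k$ yields $e_H^P \leq k(v_H - 1)$, while~\eqref{eq:edgesabs} says that $A_{j,\ell,k}$ has only $M := 2j\binom{k+1}{2} + k$ extras in total. Hence
$$d_1(H) \;\leq\; k + \frac{e_H^*}{v_H - 1} \;\leq\; k + \frac{M}{v_H - 1},$$
so $d_1(H) \leq k + \delta$ is immediate when $v_H \geq 1 + M/\delta$, while the trivial bound $d_1(H) \leq \binom{v_H}{2}/(v_H - 1) = v_H/2$ handles $v_H \leq 2(k+\delta)$.

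The real content lies in the intermediate range $2(k+\delta) < v_H < 1 + M/\delta$. For this I would exploit the local structure from Section~\ref{sec:absorber}. Taking $\ell_0 = 2k$, properties~(A1)--(A2) force the $4j$ junctions to group into $2j$ pairwise disjoint \emph{junction-pair clusters} (each consisting of the access sets of one pair $\{a_{i,1}, a_{i,2}\}$ or $\{b_{i,1}, b_{i,2}\}$, forming a run of at most $2k$ consecutive vertices of $P$), and~(A3) arranges these clusters together with $v$ into a single cycle via the junction edges and the path $a_{1,1}vb_{1,2}$. Every extra either lies in the bipartite ``staircase'' between $\mathrm{Acc}(a)$ and $\mathrm{Acc}(b)$ for some junction edge $ab$ (containing $\binom{k+1}{2}$ edges, in which the vertex at distance $i$ from $a$ has exactly $k-i$ neighbours in $\mathrm{Acc}(b)$) or in the analogous region around $v$. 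Letting $c$ denote the number of clusters met by $V(H)$, the cyclic arrangement implies that at most $c$ junction edges connect two intersected clusters, giving the refined bound $e_H^* \leq c\binom{k+1}{2} + k$.

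Choosing $j \geq \max\{3, \lceil 1/(4\delta)\rceil\}$, I would combine this refined estimate with a tighter per-cluster bound of $\min\{\binom{t}{2}, kt - \binom{k+1}{2}\}$ (for $t$ vertices chosen from a single cluster) on the $P^k$-edges within a cluster---rather than the coarser $k(t-1)$ from Proposition~\ref{lem:Balanced}---and verify by case analysis on the cluster sizes that $d_1(H) \leq k + 1/(4j) \leq k + \delta$ throughout the intermediate range; the extremal configuration is the ``full access subgraph'' which attains density exactly $k + 1/(4j)$. The main obstacle is the subcase where $V(H)$ contains only a handful of vertices in each of many clusters: here the per-junction-edge estimate $\binom{k+1}{2}$ overcounts the extras, and one must replace it by $\min\{|V(H)\cap\mathrm{Acc}(a)|\cdot|V(H)\cap\mathrm{Acc}(b)|, \binom{k+1}{2}\}$ coming from the staircase structure, summed carefully over the junction-edge cycle.
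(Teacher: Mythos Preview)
Your proposal contains a genuine error: the choice $\ell_0=2k$ cannot work, and your claimed extremal density $k+1/(4j)$ for the intermediate range is false. Take $H=A_{j,\ell,k}$ itself. From~(\ref{eq:edgesabs}) and $e(P_s^k)=ks-\binom{k+1}{2}$ one gets
\[
d_1(A_{j,\ell,k})=k+\frac{(2j-1)\binom{k+1}{2}+k}{j(2\ell+4)+\ell},
\]
which for $\ell=2k$ tends to $k+k/4$ as $j\to\infty$. With your parameters ($\ell_0=2k$ and $j\approx 1/(4\delta)$ large when $\delta$ is small), the full absorber lies in your ``intermediate range'' $2(k+\delta)<v_H<1+M/\delta$ yet has $d_1\approx k+k/4$, contradicting your asserted bound $k+1/(4j)$. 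The ``full access subgraph'' you identify as extremal is not extremal: once non-access spine vertices are included, the density rises because the number of extras $M$ grows linearly in $j$ while the segment length between junction pairs stays bounded. Any correct proof must let $\ell_0$ grow with $1/\delta$.

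The paper's argument takes $j\ge k/\delta+3$ and then $\ell_0\ge 2jk^4/\delta$. The large $\ell$ is what makes Case~1 work: if some component of $H\cap P^k$ reaches from one junction-pair cluster to the next, then that component alone has $\Omega(\ell/k)$ vertices, so $M/(v_H-1)\le\delta$ immediately. In Case~2 (all components of $H\cap P^k$ localized to a single cluster), the paper does not attempt your per-cluster edge-counting; instead it orients every edge of $H$---the $P^k$-edges along $P$, and the extras according to a fixed cyclic orientation of the cycle $D$ of junctions from~(A3)---and shows that each vertex has outdegree at most $k$, with at least one vertex of outdegree $\le 0$ unless $H$ uses an edge associated with every junction edge. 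In the latter case $v_H\ge 2j$, giving $d_1\le k+k/(2j-1)\le k+\delta$. This orientation argument replaces the open-ended ``case analysis on cluster sizes'' and ``summed carefully over the junction-edge cycle'' that you leave unspecified.
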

\begin{proof}
Choose $j\ge k/\delta+3$ and $\ell_0\ge 2jk^4/\delta$.
Pick $\ell\ge \ell_0$ and let $P$ and $P'$ be the spine and the augmented path of $A_{j,\ell,k}$. 
So $A_{j,\ell,k}=P^k\cup (P')^k$. Consider any subgraph $H$ of $A_{j,\ell,k}$ on $v_H\ge 2$ vertices. Let $H^*:=H\cap P^k$.
We will distinguish the following two cases.
Roughly speaking, in the first case $H^*$ `spans' a large interval of $P^k$, in which case we can easily deduce that $d_1(H)$ is at most~$k+\delta$.

\medskip

\noindent\textbf{Case~1.} \emph{There is a component $C$ of $H^*$ satisfying one of the following properties:
\begin{itemize}
\item $V(C)\cap ({\rm Acc}(a_{i,1})\cup {\rm Acc}(a_{i,2}))\neq \emptyset$ and $V(C)\cap ({\rm Acc}(b_{i,1})\cup {\rm Acc}(b_{i,2}))\neq \emptyset$
for some $i\le j$.
\item $V(C)\cap ({\rm Acc}(b_{i,1})\cup {\rm Acc}(b_{i,2}))\neq \emptyset$ and $V(C)\cap ({\rm Acc}(a_{i+1,1})\cup {\rm Acc}(a_{i+1,2}))\neq \emptyset$
for some $i< j$.
\end{itemize}}

\smallskip

\noindent
We assume that the first property holds. The argument for the second property is similar. Note that the distance between $a_{i,2}$ and $b_{i,1}$ on $P$ is $\ell+1$
and so the distance between ${\rm Acc}(a_{i,1})\cup {\rm Acc}(a_{i,2})$ and ${\rm Acc}(b_{i,1})\cup {\rm Acc}(b_{i,2})$ on $P$ is
$\ell+1-2(k-1)$. Thus $|C|\ge (\ell-2k)/k=\ell/k-2$. Moreover, Proposition~\ref{lem:Balanced} implies that $d_1(H^*)\le k$. Thus
\begin{eqnarray*}
d_1(H)& = & \frac{e_H}{v_H-1}= \frac{e_{H^*}}{v_H-1}+ \frac{e_{H\setminus E(H^*)}}{v_H-1}
\le \frac{e_{H^*}}{v_H-1}+\frac{e(A_{j,\ell,k})-e(P^k)}{v_H-1}\\
& \stackrel{(\ref{eq:edgesabs})}{\le} & k+\frac{2j\binom{k+1}{2}+k}{\ell/k-3}
\le k+\frac{2jk^4}{\ell}\le k+\delta,
\end{eqnarray*}
as required.%
   \COMMENT{To see the 2nd inequality on the 2nd line, note that $\frac{2j\binom{k+1}{2}+k}{\ell/k-3}\le \frac{j(k+1)k+1}{\ell/2k}
\le \frac{\frac{4}{3}j(k+1)k}{\ell/2k} \le \frac{jk^3}{\ell/2k}$.}                     

\medskip

\noindent\textbf{Case~2.} \emph{There is no component of $H^*$ as in Case~1.}

\smallskip

\noindent
Let $H'$ be the spanning subgraph of $H$ whose edge set is $E(H)\setminus E(H^*)$. So every edge of $H'$ lies in $E((P')^k)\setminus E(P^k)$.
Our first aim is to choose a suitable orientation of the edges of~$H$. If $xy\in E(H^*)$ we orient $xy$ towards $y$ if and only if $y$ succeeds $x$ on~$P$.
Recall from~(A3) in Section~\ref{sec:absorber} that the subgraph $D$ of $A_{j,\ell,k}$
consisting of all junction edges, of the path $a_{1,1}vb_{1,2}$ and of all the edges $a_{i,1}a_{i,2}$, $b_{i,1}b_{i,2}$
(for all $i=1,\dots,j$) is a cycle. In order to orient the edges in $E(H')=E(H)\setminus E(H^*)$, we will use an orientation
of this cycle $D$, which we will now choose. Orient $a_{1,1}v$ towards $v$ and $vb_{1,2}$ towards $b_{1,2}$.
Since $D$ contains the path $a_{1,1}vb_{1,2}$ we can orient all edges of $D$ to obtain a directed cycle. We now use this orientation of $D$
in order to orient the edges in $E(H')$ as follows. Recall from Section~\ref{sec:absorber} that every edge in $E(A_{j,\ell,k})\setminus E(P^k)\supseteq E(H')$
is either associated with a unique junction edge or with the absorbtion vertex~$v$ of $A_{j,\ell,k}$.
For every edge $xy\in E(H')$ which is associated with some junction edge $ab$,
orient $xy$ towards $y$ if and only if $x\in {\rm Acc}(a)$ and $y\in {\rm Acc}(b)$, where $ab$ is oriented towards $b$ (in the orientation of~$D$).
Finally, for every edge $xy\in E(H')$ which is associated with $v$, orient $xy$ towards $y$ if and
only if $x\in {\rm Acc}(a_{1,1})\cup\{v\}$ and $y\in {\rm Acc}(b_{1,2})\cup\{v\}$.

Note that for every $i=2,\dots,j$, one of the junctions $a_{i,1},a_{i,2}$ sends out a junction edge while the other junction
receives a junction edge (in the orientation of~$D$). Let $a(+,i)$ denote the former junction and let $a(-,i)$ denote the latter one.
Similarly, for every $i=2,\dots,j$ one of the junctions $b_{i,1},b_{i,2}$ sends out a junction edge while the other junction
receives a junction edge. Let $b(+,i)$ denote the former junction and let $b(-,i)$ denote the latter one.
Let $a(+,1):=a_{1,1}$, $a(-,1):=a_{1,2}$, $b(+,1):=b_{1,1}$ and $b(-,1):=b_{1,2}$.
Then the following property holds for all $i=1,\dots,j$:
\textno No vertex in ${\rm Acc}(a(-,i))$ sends out an
edge in $H'$ while no vertex in ${\rm Acc}(a(+,i))$ receives an edge in $H'$. 
Similarly, no vertex in ${\rm Acc}(b(-,i))$ sends out an
edge in $H'$ while no vertex in ${\rm Acc}(b(+,i))$ receives an edge in~$H'$. & (*)

For each $i=1,\dots,j$, let $C(i,a)$ denote the union of all components of $H^*$ which intersect ${\rm Acc}(a_{i,1})\cup {\rm Acc}(a_{i,2})$
and let $C(i,b)$ denote the union of all components of $H^*$ which intersect ${\rm Acc}(b_{i,1})\cup {\rm Acc}(b_{i,2})$.
(Some of the $C(i,a)$ and $C(i,b)$ might be empty.)
Let $C^*$ denote the union of all components of $H^*$ which do not intersect any of ${\rm Acc}(a_{i,i'})$ or ${\rm Acc}(b_{i,i'})$
for $i'=1,2$ and $i=1,\dots,j$. 
Our assumption of Case~2 implies that the vertex sets of graphs $C(1,a),\dots,C(j,a), C(1,b),\dots,C(j,b),C^*$ form
a partition of $V(H^*)=V(H)\setminus \{v\}$.

Consider the vertices of $C^*$ in their order on~$P$. In the graph $H^*$ each of these vertices
sends out at most $k$ edges (in our chosen orientation). However, the last vertex of $C^*$ does not send out any edges in $H^*$. 
Thus if $C^*\neq \emptyset$ then
\begin{equation}\label{eq:edgesC*}
e(C^*)\le k|C^*|-k.
\end{equation}
Note also that none of the vertices in $C^*$ are incident to any edges of $H'$, so~(\ref{eq:edgesC*}) bounds the number of all edges of $H$ incident to vertices of $C^*$.

Let $r(i,a):= \min\{|C(i,a)|,k\}$. Consider the vertices of $C(i,a)$ in their order on~$P$. In the graph $H^*$ each of these vertices
sends out at most $k$ edges (in our chosen orientation). However, the last vertex of $C(i,a)$ does not send out any edges in $H^*$. More generally, for each
$r=0,\dots,r(i,a)-1$ the vertex of $C(i,a)$ at position $|C(i,a)|-r$ sends out at most $r$ edges in $H^*$.
Thus if $C(i,a)\neq \emptyset$ then
\begin{equation}\label{eq:edgesCia}
e(C(i,a))\le k|C(i,a)|-(k+(k-1)+\dots + (k-r(i,a)+1)).
\end{equation}
Let us now count the number of edges in $H'$ sent out by vertices of $C(i,a)$. $(*)$ implies that no vertex
in $C(i,a)-{\rm Acc}(a(+,i))$ sends out an edge in $H'$. But $a(+,i)$ sends out at most $k$ edges in $H'$.
More generally, for each $r=0,\dots,r(i,a)-1$ the unique vertex $x$ in ${\rm Acc}(a(+,i))$ which has distance $r$ from $a(+,i)$
on $P$  is incident to $k-r$ edges in $E(A_{j,\ell,k})\setminus E(P^k)\supseteq E(H')$. So $x$ sends out at most $k-r$ edges in $H'$ .
(Note that some of these $r(i,a)$ vertices $x$ of $P$ might not lie in $C(i,a)$.) Thus we have the following property:
\textno Altogether the vertices in $C(i,a)$ send out at most $k+(k-1)+\dots + (k-r(i,a)+1)$ edges
lying in the graph $H'$. &(**)

Clearly, the analogues of~(\ref{eq:edgesCia}) and~$(**)$ also hold for the $C(i,b)$. Moreover, the absorbtion vertex~$v$ of $A_{j,\ell,k}$
sends out at most $k$ edges in~$H$. Let $I^*:=1$ if $C^* \neq \emptyset$ and $I^*:=0$ otherwise. Altogether the above shows that
\begin{equation}\label{eq:edgeH}
e_H\le kv_H - k I^*.
\end{equation}
We now distinguish three subcases.

\medskip

\noindent
\textbf{Case~2a.} \emph{$C^* \neq \emptyset$.} 

\smallskip

\noindent
In this case we have 
$$d_1(H)\stackrel{(\ref{eq:edgeH})}{\le} \frac{kv_H-k}{v_H-1}=k,
$$
as required.

\medskip

\noindent
\textbf{Case~2b.} \emph{$H$ contains at least one edge associated with $v$ as well as at least one edge associated with every junction edge $ab$.} 

\smallskip

\noindent In this case we have that $v_H\ge 2j$ since there are $2j-1$ junction edges.
Thus
$$d_1(H)\stackrel{(\ref{eq:edgeH})}{\le} \frac{kv_H}{v_H-1}=k+\frac{k}{v_H-1}\le k+\frac{k}{2j-1}\le k+\delta,
$$
as required.

\medskip

\noindent
\textbf{Case~2c.} \emph{$C^*=\emptyset$. Moreover, $H$ avoids all edges
associated with~$v$ or there exists a junction edge $ab$ such that $H$ avoids all edges associated with $ab$.}

\smallskip

\noindent 
We will first show that in this case at least one of the following four properties hold:
\begin{itemize}
\item[(a)] There is an $i$ with $2\le i\le j$ such that $C(i,a)\neq \emptyset$ and $H$ avoids all edges associated with the junction edge sent out by $a(+,i)$.
\item[(b)] There is an $i$ with $1\le i\le j$ such that $C(i,b)\neq \emptyset$ and $H$ avoids all edges associated with the junction edge sent out by $b(+,i)$.
\item[(c)] $C(1,a)\neq \emptyset$ and $H$ avoids all edges associated with~$v$.
\item[(d)] $C(1,b)= \emptyset$ and $v\in V(H)$.
\end{itemize}
To prove that one of (a)--(d) holds, let $D'$ be the cycle obtained from $D$ by replacing the path $a_{1,1}vb_{1,2}$
with a single edge $e_v$ from $a_{1,1}$ to $b_{1,2}$ and contracting each edge $a_{i,1}a_{i,2}$ into a new vertex $(i,a)$ as well as contracting each edge $b_{i,1}b_{i,2}$
into a new vertex $(i,b)$ (for all $i=1,\dots,j$). Thus every edge of $D'$ apart from $e_v$ corresponds to a junction edge.
Moreover, our orientation of $D$ induces one of~$D'$. So we will view $D'$ as a directed cycle.
Colour the vertex $(i,a)$ of $D'$ red if $C(i,a)\neq \emptyset$ and colour $(i,b)$ red if $C(i,b)\neq \emptyset$.
Colour the edge $e_v$ of $D'$ red if $H$ contains some edge associated with $v$. Colour each (junction) edge $e\neq e_v$ of $D'$ red
if $H$ contains some edge associated with~$e$. Since $v_H\ge 2$  and since we are assuming that $C^*=\emptyset$, it follows that at least one vertex of $D'$ is red. Moreover, our assumption that Case~2c holds  implies that not all edges of $D'$ are red. 

Let us first consider the case when $v\notin V(H)$. Then both endvertices of a red edge of $D'$ are red.
Thus $D'$ contains a red vertex $w$ such that the edge from $w$ to its successor on $D'$ is not red.
If $w=(i,a)$ for some $i>1$ then~(a) holds. If $w=(i,b)$ for some $i\ge 1$ then~(b) holds. If $w=(1,a)$ then (c) holds.
So suppose next that $v\in V(H)$. In this case we can only guarantee that both endvertices of a red edge $e\neq e_v$ of $D'$ are red.%
    \COMMENT{We might have that $H$ contains some edge from ${\rm Acc}(a_{1,1})$ to $v$ (and so $H$ does not avoid all edges
associated to $v$) but $C(1,b)=\emptyset$, in which case $e_v$ is red, $(1,a)$ is red but $(1,b)$ is not red.}
We may also assume that $(1,b)$ is red (otherwise~(d) holds). If not all edges in $D'-e_v$
are red then $D'$ contains a red vertex $w\neq (1,a)$ such that the edge from $w$ to its successor on $D'$ is not red. Similarly as before this implies
that (a) or (b) holds. So suppose that all edges in $D'-e_v$ are red. This implies that $(1,a)$ is red and $e_v$ is not red.
Thus~(c) holds. This completes the proof that one of (a)--(d) holds. 

Suppose first that (a) holds. Then the vertices in $C(i,a)$ send out no edges lying in the graph $H'$.
Together with~(\ref{eq:edgesCia}) this implies that instead of~(\ref{eq:edgeH}) we have that
$$
e_H\le kv_H-(k+(k-1)+\dots + (k-r(i,a)+1))\le kv_H-k
$$
and so $d_1(H)\le k$ as required. The arguments for~(b) and~(c) are similar. So let us now assume that~(d) holds.
Then $v$ does not send out any edges in the graph~$H$. So instead of~(\ref{eq:edgeH}) we have $e_H\le kv_H-k$
and so $d_1(H)\le k$ as required.
\end{proof}

\section{Linking up $k$th powers of paths: Proof of Lemma~\ref{lem:linking}}\label{sec:linking}

A result of Kreuter~\cite{Kreuter} determines the threshold for the existence of a linear number of disjoint copies of a 
given graph $Q$ in a random graph $G_{n,p}$.
(This threshold is roughly the same as the one in Theorem~\ref{thm:JKV}.) 
We will prove an analogue of this result for partite multigraphs (see Lemma~\ref{DGbound}).
We will then apply Lemma~\ref{DGbound} to find disjoint copies of a partite multigraph $Q$,
where each copy of $Q_i$ of $Q$ will correspond to an $(A_i,B_i)$-linkage $R_i$ (see Lemma~\ref{partial}).
This allows us to link up a positive fraction of the pairs $(A_i,B_i)$ we are required to link up.
Roughly speaking, in the proof of Lemma~\ref{lem:linking} we will apply Lemma~\ref{partial} repeatedly to eventually 
obtain disjoint linkages for all the pairs $(A_i,B_i)$ that we are required to link up.

We write $[k]:=\{1,\dots,k\}$ and $[-k]:=\{-1,\dots,-k\}$. 
Suppose that $p=p(n)$.
We define the random graph $\cG=\cG(n_0,n,t,k,p)$ as follows:
Consider the complete $(t+1)$-partite multigraph $K$ with vertex classes $V_1,\dots,V_t$ of size $n$ and one vertex class $V_0$ of size $n_0$, and
where each edge from $V_0$ to $V_i$ has multiplicity exactly $2k$ (for all $i=1,\dots,t$) and all other edges have multiplicity one.
Moreover, for all pairs of vertices $x\in V_0$ and $y\in V_1\cup\dots\cup V_t$ the $2k$ edges between $x$ and $y$ in $K$ have labels in
$[-k]\cup [k]$ and these labels are distinct for different edges between $x$ and $y$.
We obtain $\cG$ by including each edge of $K$ into $\cG$ with probability $p$, independently of all other edges.

Let $G$ be any $(t+1)$-partite multigraph with vertex classes $Y_0,\dots,Y_t$ such that $|Y_0|\le n_0$ and $|Y_i|\le n$ for all $i=1,\dots,t$,
and where each edge between $Y_0$ and $Y_i$ has multiplicity at most $2k$
(for all $i=1,\dots,t$) and all other edges have multiplicity one.%
     \COMMENT{the graphs we are interested in will have $|Y_i|=1$. But in the second moment proofs below we need to allow $|Y_i| \le 2$.}
Moreover, for all pairs of vertices $a\in Y_0$ and $b\in Y_1\cup\dots\cup Y_t$ the edges between $a$ and $b$ in $G$ have labels in $[-k]\cup [k]$
and these labels are distinct for different edges between $a$ and $b$.

We say that a (not necessarily induced) copy of $G$ in $K$ is a \emph{good copy of $G$} if  for all $i=0,\dots,t$ each vertex in $Y_i$ is mapped to a vertex in $V_i$
and if each edge of $G$ with label~$j$ between some pair $a\in Y_0$ and $b\in Y_1\cup\dots\cup Y_t$ of vertices is mapped to the edge of $K$ with label~$j$
between the images of $a$ and $b$ in~$K$. Let $X_{G}$ denote the number of good copies of $G$ in $\cG$.
Let $D_G$ denote the maximum size of a set of disjoint good copies of $G$ in $\cG$. 
Set 
$$
\Phi_G:= \min \{ \ex( X_{G'} ) \colon G' \subseteq G, e_{G'}>0 \}
$$
and
$$
\Phi^v_G:= \min \{ \ex( X_{G'} ) \colon G' \subseteq G, v_{G'}>0 \}.
$$
Note that we allow $G'$ to consist of a single vertex in the second definition. Also note that $\Phi^v_G \le \Phi_G$.

Throughout this section, when using the $O(.)$, $\Theta(.)$ and $\Omega(.)$ notation, we mean that the size~$n$ of the vertex classes $V_1,\dots,V_t$ tends
to infinity. In most cases $n_0$ will be a function of~$n$, but we sometimes also allow $n_0=1$. The number of vertices in the graph~$G$
will always be bounded.

\begin{lemma} \label{DGbound}
Suppose that $\Phi^v_G \to \infty$ as $n\to \infty$. Then there is a constant $c>0$ (depending only on $G$) such that with probability $1-O(1/\Phi^v_G)$
we have~$D_G \ge c\Phi_G^v$.
\end{lemma}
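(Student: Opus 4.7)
The plan is to combine a second-moment bound on $X_G$ with a counting bound on intersecting pairs of good copies, and then extract a disjoint subcollection via a Tur\'an / Caro--Wei argument. First, note that $\ex(X_G)\ge\Phi_G\ge\Phi_G^v$. Writing $X_G=\sum_C I_C$ with $C$ ranging over the good copies of $G$ in the complete multigraph $K$, the covariance $\mathrm{Cov}(I_{C_1},I_{C_2})$ vanishes unless $C_1$ and $C_2$ share an edge; grouping pairs by their common subgraph $H\subseteq G$ with $e_H\ge 1$, the standard subgraph-intersection calculation gives
\[
\Var(X_G)\;\le\;\ex(X_G)\,+\,O\!\left(\sum_{H\subseteq G,\,e_H\ge 1}\frac{\ex(X_G)^2}{\ex(X_H)}\right)\;=\;O\!\left(\frac{\ex(X_G)^2}{\Phi_G}\right).
\]
Chebyshev then yields $X_G\ge\ex(X_G)/2$ with probability $1-O(1/\Phi_G)=1-O(1/\Phi_G^v)$.

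Next, let $Y$ count the unordered pairs of good copies of $G$ present in $\cG$ that share at least one vertex. Classifying such pairs by their intersection $H\subseteq G$ (now allowing $H$ to be edgeless, so that $H$ ranges over all subgraphs with $v_H\ge 1$), and counting extensions of a copy of $H$ to a copy of $G$, one obtains
\[
\ex(Y)\;=\;O\!\left(\sum_{H\subseteq G,\,v_H\ge 1}\frac{\ex(X_G)^2}{\ex(X_H)}\right)\;=\;O\!\left(\frac{\ex(X_G)^2}{\Phi_G^v}\right).
\]
An analogous variance estimate $\Var(Y)=O(\ex(Y)^2/\Phi_G^v)$, obtained by the same sort of case analysis (now applied to pairs of pairs of copies), combined with Chebyshev gives $Y\le C\,\ex(X_G)^2/\Phi_G^v$ with probability $1-O(1/\Phi_G^v)$.

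On the intersection of these two events, the conflict graph whose vertices are the good copies of $G$ contained in $\cG$ and whose edges are the $Y$ intersecting pairs contains, by Tur\'an / Caro--Wei, an independent set (i.e.\ a disjoint subcollection) of size at least $X_G^2/(X_G+2Y)$. Plugging in $X_G\ge\ex(X_G)/2$ and $Y\le C\,\ex(X_G)^2/\Phi_G^v$, and using $\ex(X_G)\ge\Phi_G\ge\Phi_G^v$, gives $D_G=\Omega(\Phi_G^v)$ with a constant $c$ depending only on $G$. The main obstacle is the concentration bound for $Y$: plain Markov is too weak to give failure probability $O(1/\Phi_G^v)$, so one has to execute the second-moment computation for $Y$. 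This is essentially bookkeeping, but contributions must be indexed by the joint intersection pattern of two unordered pairs of copies of $G$, yielding more subgraph-types to sum over than in the variance of $X_G$; organising the terms by the isomorphism type of the shared part keeps all contributions controlled by $\Phi_G^v$.
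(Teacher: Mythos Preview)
Your approach is essentially the paper's: apply the Tur\'an/Caro--Wei bound to the conflict graph, concentrate $X_G$ via the second moment, and concentrate the count of intersecting pairs via the second moment as well. The only organisational difference is that the paper splits your $Y$ as $\sum_F X_F$ over the finitely many union types $F=G_1\cup G_2$ and applies Chebyshev to each $X_F$ separately, which is equivalent since there are $O_G(1)$ such types.

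One point you understate: the variance bound $\Var(X_F)=O(\Psi_G^4/(\Phi_G^v)^3)$ (equivalently your $\Var(Y)=O(\ex(Y)^2/\Phi_G^v)$) is not pure bookkeeping. The issue is that a subgraph $H'\subseteq F=G_1\cup G_2$ need not sit inside a single copy of $G$, so $\Phi_F$ is not a priori comparable to $\Phi_G^v$, and ``organising by the isomorphism type of the shared part'' does not by itself force the required power of $\Phi_G^v$. The paper (following Kreuter and the argument in Janson--\L uczak--Ruci\'nski) uses the log-supermodularity identity $\Psi_{H_1\cup H_2}\Psi_{H_1\cap H_2}=\Psi_{H_1}\Psi_{H_2}$, applied repeatedly, to reduce every contribution to expectations of subgraphs of a single copy of $G$, which is what makes the $(\Phi_G^v)^{-3}$ bound go through. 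You should name this ingredient explicitly rather than fold it into ``bookkeeping''.
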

The proof of Lemma~\ref{DGbound} is essentially the same as that of Theorem 3.29 in~\cite{JLR}, which in turn is based on an argument of 
Kreuter~\cite{Kreuter}. The difference is that in Theorem 3.29 $X_G$ counts disjoint copies of $G$ in $G_{n,p}$
(rather than good copies of $G$ in $\cG$).
For completeness, we will give a sketch which only highlights the (very minor) adjustments one has to make.
The proof of Lemma~\ref{DGbound} needs the following proposition, which is proved using a standard application of Chebyshev's inequality
(see Lemma 3.5 and Remark~3.7 in~\cite{JLR} for a similar and more detailed argument). 
Note that Proposition~\ref{XGbound} does not assume any bounds on $n_0$.
In particular, we will later also apply it in the case when $n_0=1$.

\begin{proposition} \label{XGbound}\
\begin{itemize}
\item[(i)] $Var(X_G)=O \left( \ex(X_G)^2/\Phi_G \right)$.
\item[(ii)] Suppose that $\Phi_G \to \infty$ as $n\to \infty$ and that $\eps>0$ is fixed. Then with probability
$1-O(1/\Phi_G)$ we have~$X_G = (1\pm \eps)\ex(X_G)$.
\end{itemize}
\end{proposition}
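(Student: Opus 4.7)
The plan is a textbook second-moment calculation followed by a single application of Chebyshev's inequality. The only non-routine step is to check that the partite, labelled, multi-edge structure of the host multigraph $K$ does not spoil the standard argument from~\cite{JLR}.

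For~(i), I would write $X_G=\sum_\phi I_\phi$, summed over the $N_G$ label- and partition-respecting injections $\phi$ of $V(G)$ into the vertex classes of $K$, where $I_\phi$ is the indicator that every corresponding labelled edge of $K$ lies in $\cG$. By the definition of~$\cG$, distinct labelled edges of $K$ are independent Bernoulli$(p)$ variables, so $\Pr[I_\phi=1]=p^{e_G}$ and $\ex(X_G)=N_G\,p^{e_G}$. Expanding $\Var(X_G)$, pairs $(\phi,\psi)$ whose edge images in $K$ are disjoint contribute zero by independence, and the remaining pairs are grouped by the isomorphism type of the intersection $H:=\phi(G)\cap\psi(G)$, which is a subgraph of $G$ with $e_H\ge 1$. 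A direct count bounds the number of ordered pairs with prescribed intersection of type $H$ by $N_G^2/N_H$ (choose the common copy of $H$ in $N_H$ ways, then extend in two independent ways), and each such pair contributes $p^{2e_G-e_H}$. Summing yields
\begin{equation*}
\Var(X_G)\ \le\ \sum_{\substack{H\subseteq G\\ e_H\ge 1}}\frac{N_G^2\,p^{2e_G-e_H}}{N_H}
\ =\ \sum_{\substack{H\subseteq G\\ e_H\ge 1}}\frac{\ex(X_G)^2}{\ex(X_H)}
\ =\ O\!\left(\frac{\ex(X_G)^2}{\Phi_G}\right),
\end{equation*}
where the last estimate uses that $G$ has $O(1)$ subgraphs and that $\ex(X_H)\ge\Phi_G$ for each such $H$. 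This is~(i).

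Part~(ii) is then immediate from~(i) via Chebyshev's inequality:
\begin{equation*}
\Pr\bigl[\,|X_G-\ex(X_G)|>\eps\,\ex(X_G)\,\bigr]\ \le\ \frac{\Var(X_G)}{\eps^2\,\ex(X_G)^2}\ =\ O(1/\Phi_G).
\end{equation*}

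The main obstacle---really just bookkeeping rather than a genuine difficulty---is to confirm that the argument transports to the setting of~$\cG$: the $2k$ parallel edges between $V_0$ and $V_i$ must be treated as genuinely distinct edges, so that each one is an independent Bernoulli$(p)$ trial and is counted once in $e_G$ and once in $N_G$; and the constants $N_G$, $N_H$ have to be the partite, label-respecting analogues of the usual injection counts. Once this is set up, the algebraic identity $N_G^2\,p^{2e_G-e_H}/N_H=\ex(X_G)^2/\ex(X_H)$ is automatic, and the proof is a direct transcription of Lemma~3.5 in~\cite{JLR}. In particular no hypothesis on~$n_0$ enters the calculation, so the case $n_0=1$ requires no separate treatment.
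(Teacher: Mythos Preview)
Your proposal is correct and follows essentially the same route as the paper: group ordered pairs of good copies by the isomorphism type $H$ of their intersection, bound the contribution of each type by $\ex(X_G)^2/\ex(X_H)$ (the paper writes this as $\Psi_G^2/\Psi_H$ with $\Psi_H:=p^{e_H}n_0^{|Y_0|}n^{v_H-|Y_0|}$, which is the same thing up to constants), and then apply Chebyshev for~(ii). Your remark that the partite, labelled, multi-edge setup requires only bookkeeping and no new ideas is exactly the paper's point of view as well.
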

In the next two proofs we will use the following notation:
Suppose that $H$ is a $(t+1)$-partite multigraph on a bounded number of vertices with vertex classes $Y_0,\dots Y_t$ such that $|Y_0|\le n_0$.
Then we define
\begin{equation} \label{Psi}
\Psi_H:=p^{e_H} n_0^{|Y_0|} n^{v_H-|Y_0|}.
\end{equation}
Note that $\ex (X_H)= \Theta (\Psi_H)$.

\removelastskip\penalty55\medskip\noindent{\bf Proof of Proposition~\ref{XGbound}. }
Given a good copy $G'$ of $G$ in $K$, let $I_{G'}$ denote the indicator function that $G'$ is contained in $\cG$.
Below, the summations are always over good copies of the relevant graphs in $K$.
With the above notation, we have
\begin{align*}
\ex(X_G^2) & =\sum_{G',G''} \ex(I_{G'}I_{G''})\le  \ex(X_G)^2 + \sum_{E(G') \cap E(G'') \neq \emptyset} \ex(I_{G'}I_{G''})  \\
& =\ex(X_G)^2 + O \left( \sum_{ H \subseteq G, e_H >0 }  \frac{\Psi_G^2}{ \Psi_H} \right)  \\
& =\ex(X_G)^2 + O \left( \sum_{ H \subseteq G, e_H >0 } \frac{\ex(X_G)^2}{\ex(X_H)} \right) \\
& =\ex(X_G)^2 + O \left( \ex(X_G)^2/\Phi_G \right).
\end{align*}
So $Var(X_G)=O( \ex(X_G)^2/\Phi_G)$. This proves (i). 
A straightforward application of Chebyshev's inequality%
    \COMMENT{Get $\mathbb{P}(|X_G-\ex(X_G)|\ge \eps \ex(X_G))\le \frac{Var(X_G)}{\eps^2 \ex(X_G)^2}=O(\frac{1}{\eps^2\Phi_G})\to 0$.}
 now completes the proof of (ii).
\endproof

\removelastskip\penalty55\medskip\noindent{\bf Proof of Lemma~\ref{DGbound}. }
The proof begins by considering an auxiliary graph $\Gamma$, where the vertices of $\Gamma$ correspond to good copies of $G$ in $\cG$
(rather than to copies in $G_{n,p}$ as in the proof of Theorem 3.29 in~\cite{JLR}), with an 
edge between two vertices of $\Gamma$ if the corresponding copies of~$G$ share at least one vertex.
So $\Gamma$ has $X_G$ vertices and $\sum_F X_F$ edges, where the sum is taken over all unions $F= G_1 \cup G_2$
of two copies of $G$ sharing at least one vertex, and where $F$ is viewed as a $(t+1)$-partite multigraph
whose $i$th vertex class $Y_i^F$ is the union of the $i$th vertex classes of $G_1$ and $G_2$ (but we include any vertex in $G_1\cap G_2$ only once).
Since $X_F=0$ if $|Y_i^F|>n_0$, we only sum over all those $F$ for which $|Y_0^F|\le n_0$.

Note that any independent set of vertices in $\Gamma$ corresponds to a
collection of pairwise disjoint good copies of $G$ in $\cG$. So one can use Tur\'an's theorem to show that 
\begin{equation}\label{eq:turan}
D_G \ge \frac{X^2_G}{X_G+2 \sum_F X_F}.
\end{equation}
Proposition~\ref{XGbound}(ii) implies that with probability $1-O(1/\Phi_G)$, we have $\ex (X_G)/2 \le X_G \le 2\ex(X_G)$.
Together with~(\ref{eq:turan}) this implies that it suffices to show that with probability $1-O(1/\Phi^v_G)$~we have
\begin{equation} \label{eq:XF}
X_F =O \left( \frac{(\ex X_G)^2}{\Phi_G^v} \right)=O\left( \frac{\Psi^2_G}{\Phi_G^v} \right),
\end{equation}
where $\Psi_G$ is as defined in~(\ref{Psi}).
To prove (\ref{eq:XF}), the first step is to observe that if $F= G_1 \cup G_2$ is as above and $H:=G_1\cap G_2$, then
\begin{equation}\label{eq:exF}
\ex(X_F)=\Theta(\Psi_F)=\Theta\left(\frac{\Psi^2_G}{\Psi_H}\right)=O\left( \frac{\Psi^2_G}{\Phi_G^v} \right).
\end{equation}
Next, note  that Proposition~\ref{XGbound}(i) implies that
\begin{equation} \label{VarXF}
Var(X_F)=O(\Psi^2_F/\Phi_F).
\end{equation} 
To bound this expression, we need the following log-supermodularity property, where $H_1$ and $H_2$ 
are arbitrary $(t+1)$-partite multigraphs. This property follows easily from the definition of $\Psi_H$
(indeed, the overlap between $H_1$ and $H_2$ contributes twice to both the left and right hand side).
$$
\Psi_{H_1 \cup H_2} \Psi_{H_1 \cap H_2}= \Psi_{H_1}\Psi_{H_2}.
$$
Now one can proceed exactly as in the proof of Theorem 3.29:
Using repeated applications of the log-supermodularity, one can show that the%
   \COMMENT{Should we include more details?}
right hand side of~(\ref{VarXF}) is $O(\Psi_G^4/(\Phi_G^v)^3)$. 
With this bound, Chebyshev's inequality now implies that%
   \COMMENT{Get $\mathbb{P}(|X_F-\ex(X_F)|\ge \frac{\Psi_G^2}{\Phi_G^v \cdot  \ex(X_F)} \ex(X_F))\le
\frac{Var(X_F)}{\ex(X_F)^2}\frac{(\Phi_G^v)^2\cdot \ex(X_F)^2}{\Psi_G^4}=O\left(\frac{1}{\Phi_G^v}\right)$}
$$
\mathbb{P} \left( X_F \ge \ex(X_F) + \frac{\Psi_G^2}{\Phi_G^v} \right)  \le Var(X_F)\cdot \frac{(\Phi^v_G)^2}{\Psi^4_G}
=O \left( 1/\Phi_G^v \right).
$$
Together with~(\ref{eq:exF}) this implies that (\ref{eq:XF}) holds with the required probability.
\endproof

We now apply the above results to find powers of paths. Let $k\ge 2$ and $s\ge 4k$.
Recall that $P^k_s$ denotes the $k$th power of a path $P_s=x_1 \dots x_s$ on $s$ vertices.
Let $Q$ be the multigraph obtained from $P^k_s$ by contracting $x_1,\dots,x_{k}, x_{s-(k-1)}, \dots ,x_s$ into a single vertex $x_0$ and
deleting any resulting loops at $x_0$ (but not removing any of the multiple edges).
So $Q$ is a multigraph on $t+1$ vertices, where $t:=s-2k$ and where $x_0$ has degree $k(k+1)$ and all other vertices have degree $2k$.
We view $Q$ as a $(t+1)$-partite multigraph with vertex class $Y_0:=\{x_0\}$ and each other vertex class $Y_1,\dots,Y_t$ also consisting of a single vertex.
Note that every edge of $Q$ corresponds to a unique edge of $P^k_s$.
We now assign each edge of $Q$ at $x_0$ a label as follows: For all $i\in [k]$ and every $x\in Y_1\cup \dots \cup Y_t$
we label an edge of $Q$ between $x_0$ and $x$ which corresponds to an edge of $P^k_s$ between $x_i$ and $x$ with~$i$.
Similarly, for all $i\in [-k]$ and every $x\in Y_1\cup \dots \cup Y_t$
we label an edge of $Q$ between $x_0$ and $x$ which corresponds to an edge of $P^k_s$ between $x_{s+1+i}$ and $x$ with~$i$.
So for each $i\in [k]$ there are $k-i+1$ edges with labels $i,\dots,k$ between $x_0$ and $x_{k+i}$.
Similarly, for each $i\in [-k]$ there are $k+1+i$ edges with labels $-k,\dots, i$ between $x_0$ and $x_{s-(k-1)+i}$.

\begin{lemma} \label{Phibound}
Let $s> 8k^2$ and%
   \COMMENT{If $s\le 8k^2$ then the bound on $p$ makes no sense.}
define $Q$ as above. Suppose that $1 \le n_0 \le n$ and 
$p=p(n) \ge n^{-1/k+8k/s}$. Then 
\begin{itemize}
\item[(i)] $\Phi_Q =\Omega( n^{8k^2/s})$; 
\item[(ii)] $\Phi_Q^v =\Omega(n_0)$.
\end{itemize}
\end{lemma}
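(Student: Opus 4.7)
The plan is to estimate $\Psi_{Q'}$ for every subgraph $Q' \subseteq Q$ appearing in the minima defining $\Phi_Q$ and $\Phi_Q^v$, and the argument splits naturally according to whether the contracted vertex $x_0$ lies in $V(Q')$. When $x_0 \notin V(Q')$, the subgraph $Q'$ lies in the simple graph $P^k_t$, so Proposition~\ref{lem:Balanced} gives $e_{Q'} \le k(v_{Q'}-1)$ and therefore
\[
\Psi_{Q'} = p^{e_{Q'}} n^{v_{Q'}} \ge n \cdot (p^k n)^{v_{Q'}-1} \ge n,
\]
using $p^k n \ge n^{8k^2/s} \ge 1$. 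Since $n \ge n_0$ and $n \ge n^{8k^2/s}$ (the latter because $s > 8k^2$ forces $8k^2/s < 1$), this settles both parts in this case.

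When $x_0 \in V(Q')$, write $w := v_{Q'} - 1$ and $W := V(Q') \setminus \{x_0\}$. The trivial case $w = 0$ gives $\Psi_{Q'} = n_0$, which handles (ii), while (i) is vacuous since $e_{Q'} = 0$. For $w \ge 1$ the central ingredient is the edge bound
\[
e_{Q'} \le kw + k(k+1)/2,
\]
which I prove by a back-edge count in the ordering $x_0, x_{k+1}, x_{k+2}, \dots, x_{s-k}$. Each left-cluster vertex $x_{k+r}$ with $1 \le r \le k$ contributes at most $(k-r+1) + (r-1) = k$ back-edges, since its $k-r+1$ multi-edges to $x_0$ exactly replace the $P^k_s$-back-neighbours that were contracted; each middle vertex has at most $k$ back-edges by $k$-degeneracy of $P^k_s$; and each right-cluster vertex $y_j = x_{s-2k+j}$ has at most $j+k$ back-edges (the $j$ multi-edges to $x_0$, plus up to $k$ earlier $W$-vertices within distance $k$). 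Summing over $W$ and maximising over the positions of the right-cluster vertices gives the displayed bound, where the surplus $k(k+1)/2$ is just $\sum_{j=1}^{k} j$.

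Plugging this in, and using $p \ge n^{-1/k}$ together with $p^k n \ge n^{8k^2/s}$, yields
\[
\Psi_{Q'} \ge n_0 \cdot (p^k n)^w \cdot p^{k(k+1)/2} \ge n_0 \cdot n^{8k^2 w/s - (k+1)/2 + 4k^2(k+1)/s}.
\]
The exponent passes $8k^2/s$ once $w$ exceeds a threshold $w^* = w^*(k, s)$ of order $s/k^2$, and for such $w$ we immediately get both $\Psi_{Q'} \ge n_0$ and $\Psi_{Q'} \ge n^{8k^2/s}$. For $1 \le w \le w^*$ I invoke the sharper bound $e_{Q'} \le kw$: inspecting the back-edge count, a strict surplus over $kw$ can only be realised when $W$ contains both end-clusters together with the full connecting middle segment, which forces $w \ge s - O(k)$, well above $w^*$. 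Indeed, each absent non-$x_0$ vertex decreases $e_{Q'}$ by (roughly) its degree $2k$ while decreasing $kw$ only by $k$, so fewer than $\lceil (k+1)/2 \rceil$ missing vertices already cancel the surplus $k(k+1)/2$. Consequently, for $w \le w^*$ we have $e_{Q'} \le kw$, whence $\Psi_{Q'} \ge n_0 \cdot (p^k n)^w \ge n_0 \cdot n^{8k^2/s}$, completing both parts.

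The main obstacle is making the structural argument in the last step precise: one needs to show that whenever $W$ is small enough to miss the bridging middle segment, the right-cluster back-edge contribution $j + k$ cannot actually be attained, so the overshoot $k(k+1)/2$ from the crude estimate is exactly compensated and the sharper estimate $e_{Q'} \le kw$ kicks in automatically.
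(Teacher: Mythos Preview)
Your overall decomposition is sound and close in spirit to the paper's: both split according to the size of $Q'$, and your treatment of the case $x_0\notin V(Q')$ (via Proposition~\ref{lem:Balanced}) as well as the large-$w$ case (via the crude bound $e_{Q'}\le kw+k(k+1)/2$) is correct. The genuine gap is exactly the one you flag yourself: the sharp bound $e_{Q'}\le kw$ for small $w$ is asserted but not proved.

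Your heuristic justification fails. You argue that each absent non-$x_0$ vertex lowers $e_{Q'}$ by ``roughly'' its degree $2k$, so that removing about $(k+1)/2$ vertices already kills the surplus $k(k+1)/2$. But if the $m$ absent vertices span $e'$ edges among themselves, then in fact
\[
e_{Q'}-kw \;=\; \frac{k(k+1)}{2} - km + e',
\]
and all you know from Proposition~\ref{lem:Balanced} is $e'\le k(m-1)$, which only gives $e_{Q'}-kw\le k(k-1)/2$ --- useless for $k\ge 2$. Concretely, removing a single middle vertex from the full $Q$ gives $e_{Q'}-kw=k(k-1)/2>0$, so your claim that a strict surplus forces $W$ to contain ``the full connecting middle segment'' is false. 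More vertices can be removed while still keeping $e_{Q'}>kw$, and no simple degree-counting argument pins down the correct threshold.

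The paper's fix is short and replaces your last step entirely. If $v_{Q'}\le s/(2k)$, then by pigeonhole there exist $k$ consecutive positions $x_a,\dots,x_{a+k-1}$ (with $k<a\le s-2k+1$) all absent from $Q'$. This gap splits the edges of $Q'$ into a ``left'' part (both endpoints among $x_0,x_{k+1},\dots,x_{a-1}$) and a ``right'' part (both endpoints among $x_{a+k},\dots,x_{s-k},x_0$, viewing $x_0$ as $x_{s+1}$). Orienting left edges towards the smaller index and right edges towards the larger index, every vertex has outdegree at most $k$ and at least one vertex (namely $x_0$, if present) has outdegree zero. Hence $e_{Q'}\le k(v_{Q'}-1)$, which is your desired $e_{Q'}\le kw$ when $x_0\in V(Q')$ and simultaneously handles $x_0\notin V(Q')$. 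This single observation makes the case split by size work cleanly, and is the missing idea in your proposal.
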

\begin{proof}
Note that both assertions follow if we can show that any submultigraph $Q'$ of $Q$, which contains at least one edge,
satisfies $\ex(X_{Q'})=\Omega( n^{8k^2/s} n_0)$. Let $v:=v_{Q'}$ and $e:=e_{Q'}$.

First suppose that $v \ge s/(2k)$.
In this case, it suffices to note that at most one vertex of $Q'$ has degree at most $k(k+1)$ and all others vertices of $Q'$ have degree at most $2k$.
Thus $e \le kv +k^2$ with room to spare. So recalling that $n \ge n_0 \ge 1$, we have
$$
\ex(X_{Q'})=\Omega\left( p^e n_0 n^{v-1}\right) =\Omega\left( n^{(v+k)(-1+8k^2/s)+v-1}\right).
$$
But
$$
(v+k)(-1+8k^2/s)+v-1\ge 8k^2v/s-k-1 \ge 2,
$$ 
and so the required result follows in this case, with room to spare.

So we may assume that $v \le s/(2k)$.
Consider the ordering $x_0,x_{k+1},x_{k+2}, \dots, x_{s-k}$ of the vertices of $Q$.
The assumption on $v$ implies that there are $k$ consecutive vertices%
   \COMMENT{Otherwise $s\le 2k+(v+1)k\le 3k+s/2<s$, a contradiction}
$x_a,\dots,x_{a+k-1}$ with $k < a \le s-2k+1$ which are not contained in $Q'$. Write $x_{s+1}:=x_0$.
Now for each edge $x_ix_{i'}$ of $Q'$ with $i <i'$ we either have $0\le i<i' <a$ or $a+k \le i<i' \le s+1$.
In the first case, we orient $x_ix_{i'}$ towards $x_i$ and in the second case we orient $x_ix_{i'}$ towards $x_{i'}$.
Now it is easy to see that for every vertex $x_i$ of $Q'$, the outdegree of $x_i$ in this orientation of $Q'$ is at most $k$.
Moreover, the above process yields an orientation of all edges of $Q'$
and there is at least one vertex in $Q'$ which has outdegree $0$.
(If $Q'$ contains $x_0=x_{s+1}$, then this will be one such vertex. If $Q'$ is disconnected, there will be several such vertices.)
Thus $e \le k(v-1)$.
So using that $n \ge n_0$ and $v \ge 2$, we have  
$$
\ex(X_{Q'})=\Omega\left( p^e n_0 n^{v-1}\right) =\Omega\left( n_0 (p^kn)^{v-1}\right)=\Omega\left( n_0 p^k n\right)
 =\Omega\left( n_0 n^{8k^2/s}\right),
$$
as required.
\end{proof}
We can now combine Lemma~\ref{DGbound} and Lemma~\ref{Phibound}(ii) to obtain the following result.
\begin{corollary} \label{covercor}
Let $s> 8k^2$ and define $Q$ as in Lemma~\ref{Phibound}. Suppose that $n_0 \le n$, that $n_0 \to \infty$ and
that $p=p(n) \ge n^{-1/k+8k/s}$. Then there is a constant $c>0$ (depending only on $Q$) such that 
with probability $1-O(1/n_0)$, we have~$D_Q \ge cn_0$.
\end{corollary}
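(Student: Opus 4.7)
The plan is to derive this corollary essentially as an immediate consequence of Lemma~\ref{DGbound} and part~(ii) of Lemma~\ref{Phibound}, which have already done all the real work. Both hypotheses needed to combine these results are explicitly given: $n_0 \to \infty$ together with $\Phi_Q^v = \Omega(n_0)$ guarantees that $\Phi_Q^v \to \infty$ as $n\to\infty$, which is precisely the hypothesis required to invoke Lemma~\ref{DGbound} for the graph $G=Q$.

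First I would verify that the setting of Lemma~\ref{DGbound} applies in our situation: $Q$ is the $(t+1)$-partite multigraph with labeled edges defined just before Lemma~\ref{Phibound}, so the random graph $\cG = \cG(n_0,n,t,k,p)$ from the beginning of this section contains a well-defined random variable $X_Q$ counting good copies of $Q$, and $D_Q$ is the corresponding maximum disjoint collection. Since $p \ge n^{-1/k+8k/s}$ and $s>8k^2$, the assumption of Lemma~\ref{Phibound} is met, so $\Phi_Q^v = \Omega(n_0)$. Combined with $n_0 \to \infty$, this yields $\Phi_Q^v \to \infty$.

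Then I would apply Lemma~\ref{DGbound} directly: there exists a constant $c'>0$, depending only on $Q$, such that with probability $1 - O(1/\Phi_Q^v)$ one has $D_Q \ge c' \Phi_Q^v$. Substituting the lower bound $\Phi_Q^v = \Omega(n_0)$, one obtains $D_Q \ge c n_0$ for a suitable constant $c>0$ depending only on $Q$. Since $1/\Phi_Q^v = O(1/n_0)$, the failure probability is $O(1/n_0)$, as required.

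There is no real obstacle here: the only thing to check is that the implicit constants from Lemma~\ref{Phibound}(ii) can be absorbed into the final constant $c$, and that the probability bound $O(1/\Phi_Q^v)$ from Lemma~\ref{DGbound} translates into $O(1/n_0)$ via the same inequality $\Phi_Q^v = \Omega(n_0)$. Both are routine.
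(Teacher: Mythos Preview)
Your proposal is correct and follows exactly the approach indicated in the paper, which simply states that the corollary is obtained by combining Lemma~\ref{DGbound} with Lemma~\ref{Phibound}(ii). Your write-up spells out precisely this combination: $\Phi_Q^v=\Omega(n_0)\to\infty$ allows one to invoke Lemma~\ref{DGbound}, yielding $D_Q\ge c'\Phi_Q^v\ge cn_0$ with failure probability $O(1/\Phi_Q^v)=O(1/n_0)$.
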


Our aim is now to apply Corollary~\ref{covercor} to link up given sets of vertices in $G_{n,p}$ by powers of paths.
Suppose that $A=(a_1,\dots,a_k)$ and $B=(b_1,\dots,b_k)$ are two (ordered) sequences of vertices which are disjoint from each other.
Recall that a graph $R$ is an $(A,B)$-linkage if $R$ is obtained from the $k$th power of a path whose initial endsequence is $A$ and
whose final endsequence is $B$ by deleting all edges within $A$ and within~$B$. We call $\cA:=\{ (A_1,B_1),\dots,(A_f,B_f) \}$
a \emph{set of pairwise disjoint $k$-sequence pairs} if each $A_i$ and each $B_i$ is a sequence of $k$ vertices and
all these $2f$ sequences are pairwise disjoint. A \emph{partial $\cA$-linkage of size $f'$ and parameter $s$}
consists of $\cR=\{R_1,\dots,R_{f'}\}$ where
\begin{itemize}
\item for each $i=1,\dots,f'$ there is a $j=j(i)\in [f]$ such that $R_i$ is an $(A_j,B_j)$-linkage;
\item the $R_i$ are pairwise disjoint;
\item if $j' \neq j(i)$, then $R_i$ avoids $A_{j'}\cup B_{j'}$;
\item $|R_i|=s$ for all $i=1,\dots,f'$.
\end{itemize}
If $j'\neq j(i)$ for all $i=1,\dots,f'$, we say that $(A_{j'},B_{j'})$ \emph{is unlinked by $\cR$}.

\begin{lemma} \label{partial}
For every $0<\eps<1/k$ and every $k\ge 2$ there is a constant $c>0$ such that the following holds:
Suppose that $p=p(n) \ge n^{-1/k+\eps}$ and that $\log^2 n \le f \le n/(4k)$.
Let $\cA=\{ (A_1,B_1),\dots,(A_f,B_f) \}$ be a set of $f$ pairwise disjoint $k$-sequence pairs.
Then with probability $1-O(1/\log ^2n)$, we have that~$G_{n,p}$ contains a partial $\cA$-linkage $\cR=\{R_1,\dots,R_{f'}\}$ of size
$f':=c f$ and parameter $\lceil 10k/\eps\rceil$.
\end{lemma}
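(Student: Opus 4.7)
The plan is to deduce Lemma~\ref{partial} from Corollary~\ref{covercor} by interpreting an appropriate instance of the random partite multigraph $\cG$ inside $G_{n,p}$, in such a way that good copies of the multigraph $Q$ (built from $P^k_s$ as before Lemma~\ref{Phibound}) unwind to $(A_j,B_j)$-linkages. Set $s := \lceil 10k/\eps\rceil$; since $\eps<1/k$ this gives $s>10k^2>8k^2$ and $8k/s\le 4\eps/5<\eps$. Let $W := [n] \setminus \bigcup_{j=1}^{f}(A_j \cup B_j)$, so the hypothesis $f\le n/(4k)$ gives $|W|\ge n/2$. Partition $W$ into $t:=s-2k$ classes $V_1,\ldots,V_t$ of common size $n'=\Theta(n)$, and form an additional class $V_0:=\{y_1,\ldots,y_f\}$ of formal vertices, one per pair of $\cA$.

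For the key identification, for each $y_j\in V_0$ and each $v\in V_i$ (with $i\ge 1$), decree that the $2k$ labelled edges between $y_j$ and $v$ in the underlying complete partite multigraph $K$ correspond to the $2k$ potential $G_{n,p}$-edges from $A_j\cup B_j$ to $v$, matched so that label $\ell\in[k]$ corresponds to the edge $a_{j,\ell}v$ and label $-\ell\in[-k]$ to the edge $b_{j,k+1-\ell}v$; edges within $V_1\cup\ldots\cup V_t$ are identified with themselves. Since the pairs $A_j\cup B_j$ are pairwise disjoint and disjoint from $W$, no edge of $G_{n,p}$ is used twice, and so the induced distribution on the abstract multipartite structure is exactly $\cG(f,n',t,k,p)$. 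By the construction and label scheme of $Q$, a good copy of $Q$ in $\cG$ whose $V_0$-vertex is $y_j$ unwinds to the $k$th power of a path on $s$ vertices with initial endsequence $A_j$, final endsequence $B_j$, and internal vertices in $W$; removing the edges inside $A_j$ and inside $B_j$ then leaves an $(A_j,B_j)$-linkage $R$ with $|R|=s$.

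To finish, observe that $n_0:=f$ satisfies $\log^2 n\le n_0\le n'$ and $n_0\to\infty$, and that $n'\ge n/2$ gives $p\ge n^{-1/k+\eps}\ge (n')^{-1/k+8k/s}$ for large $n$. Hence Corollary~\ref{covercor}, applied with parameters $(n_0,n',t,k,p)$, yields with probability $1-O(1/n_0)=1-O(1/\log^2 n)$ a set of at least $cf$ pairwise vertex-disjoint good copies of $Q$. Disjointness forces each $y_j$ to be used by at most one such copy, so the resulting linkages $R_1,\ldots,R_{cf}$ correspond to distinct pairs $(A_{j(i)},B_{j(i)})$, are pairwise vertex-disjoint, and avoid every $A_{j'}\cup B_{j'}$ with $j'\ne j(i)$ because their internal vertices lie in $W$. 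This is precisely a partial $\cA$-linkage of size $cf$ and parameter $s$. The only real difficulty is the encoding step: setting up the label-to-vertex bijection between edges at $y_j$ and $G_{n,p}$-edges at $A_j\cup B_j$ faithfully enough that good copies of $Q$ translate directly to linkages; once this is in place, all the probabilistic work is already packaged inside Corollary~\ref{covercor}.
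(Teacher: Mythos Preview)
Your proof is correct and follows essentially the same route as the paper's: contract each $A_j\cup B_j$ into a single vertex $y_j$, set up the labelled $(t+1)$-partite multigraph so that edges at $y_j$ correspond bijectively to $G_{n,p}$-edges at the $2k$ vertices of $A_j\cup B_j$, observe that good copies of $Q$ translate to $(A_j,B_j)$-linkages, and then invoke Corollary~\ref{covercor}. The only cosmetic differences are that the paper first passes through an auxiliary $s$-partite graph $K'$ before contracting, and takes exactly $n/2$ free vertices rather than all of $W$; neither affects the argument.
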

\proof
Set $s:=\lceil 10k/\eps\rceil$. So each $R_i$ will consist of $s$ vertices (including those in the endsequences of~$R_i$). Note that
the number of vertices contained in some $A_i$ or $B_i$ is $2kf \le n/2$.
We will view $G_{n,p}$ as a subgraph of $K_n$.
Let $N'$ consist of $n/2$ vertices of $K_n$ which are not contained in any of the $A_i$ or $B_i$. 
Let $t:=s-2k$. Partition $N'$ into $t$ classes $V_1,\dots,V_t$ of equal size $n':=n/(2t)$.
For all $j\in [k]$, let $V'_j$ consist of the $j$th vertex in each of the $A_i$.
So $|V'_j|=f$. For all $j\in [-k]$, let $V'_j$ consist of the $(k+1+j)$th vertex in each of the $B_i$.
Again $|V'_j|=f$.
Let $K'$ be the complete $s$-partite subgraph of $K_n$ induced by the vertex classes $V_1,\dots,V_t$,
and all the $V'_j$ for $j\in [-k]\cup [k]$. Let $K$ be the $(t+1)$-partite multigraph obtained from $K'$ by
contracting all the vertices in $A_i\cup B_i$ into a single vertex $y_i$, where
any resulting loops at $y_i$ are removed (but we do not remove any multiple edges).
So the vertex classes of $K$ are $V_0:=\{y_1,\dots,y_f\}$ and $V_1,\dots,V_t$. 
Note that each edge $e$ of $K$ corresponds to a unique edge $e'$ of $K'$.
We now label the edges of $K$ as follows: For all $i\in [f]$ and all $j\in [-k]\cup [k]$ we label an edge $e$ of $K$ between $y_i$
and some vertex $x\in V_1\cup\dots\cup V_t$ with $j$ if the corresponding edge $e'$ of $K'$ joins some vertex in $V'_j$ to $x$.

Now define a random graph $\cG$ as follows:
$\cG$ is a spanning subgraph of $K$, where we include an edge $e$ of $K$ into $\cG$ if and only if
the corresponding edge $e'$ of $K'$ is included in $G_{n,p}$.
This means that each edge of $K$ is included in $\cG$ with probability $p$, independently of all other edges.
So this corresponds exactly to the setting described at the beginning of the section,
with $n'$ playing the role of $n$ and $f$ playing the role of $n_0$.
 
Let $Q$ be as defined before Lemma~\ref{Phibound}.
Then a good copy of $Q$ in $K$ containing $y_i$ corresponds to an $(A_i,B_i)$-linkage in $K_n$.
(Thus a good copy of $Q$ in $\cG$ containing $y_i$ corresponds to an $(A_i,B_i)$-linkage in $G_{n,p}$.)
Similarly, a set of $\ell$ disjoint good copies 
of $Q$ in $K$ (or in $\cG$) corresponds to a partial $\cA$-linkage of size $\ell$ and parameter~$s$
in $K_n$ (or in $G_{n,p}$).

Also note that $p(n) \ge  n^{-1/k+\eps} \ge (n')^{-1/k+4\eps/5}\ge (n')^{-1/k+8k/s}$.
So we can apply Corollary~\ref{covercor} with $n'$ and $f$ playing the roles of $n$ and $n_0$ 
to see that, with with probability $1-O(1/\log ^2n)$, $G_{n,p}$ contains a partial $\cA$-linkage of parameter~$s$ and
size $cf$, where $c$ depends only on $Q$ (and thus only on $k$ and $\eps$).
\endproof

A simple consequence of the previous arguments is that we can link up a given sequence $A$ of $k$ vertices to a given 
sequence $B$ of $k$ vertices via the $k$th power of
a sufficiently long path.

\begin{lemma}\label{lem:singlelink}
Let $0<\eps<1/k$ and $k \ge 2$.
Suppose that $p \ge n^{-1/k+\eps}$ and that $A=(a_1 \dots a_k)$ and $B=(b_1 \dots b_k)$  are pairwise disjoint sequences of vertices.
Then with probability $1-O(1/\log^3 n)$, $G_{n,p}$ contains an $(A,B)$-linkage~$R$ with $|R|=\lceil 10k/\eps\rceil$.
\end{lemma}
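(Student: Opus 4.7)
The plan is to reuse the framework built up in the proof of Lemma~\ref{partial}, specialised to a single pair so that the ``source'' class $V_0$ of the auxiliary random multigraph contains only one vertex. First I would set $s:=\lceil 10k/\eps\rceil$ and verify the trivial inequalities $s\ge 10k^2>8k^2$ (using $\eps<1/k$) and $8k/s\le 4\eps/5<\eps$, so that Lemma~\ref{Phibound} applies and $p\ge n^{-1/k+\eps}\ge n^{-1/k+8k/s}$. Mimicking the construction preceding Lemma~\ref{partial} with $f=1$, I would then partition $n-2k$ of the vertices of $[n]\setminus(A\cup B)$ into equal-sized classes $V_1,\dots,V_t$ with $t:=s-2k$ and $|V_j|=n':=\Theta(n)$, and contract $A\cup B$ into a single vertex $y_1$ forming $V_0=\{y_1\}$, labelling the $2k$ parallel edges between $y_1$ and each $v\in V_1\cup\dots\cup V_t$ according to the positions of $A$ and $B$. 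Writing $\cG=\cG(1,n',t,k,p)$ for the resulting random multigraph and $Q$ for the multigraph defined in the paragraph preceding Lemma~\ref{Phibound}, every good copy of $Q$ in $\cG$ translates back into an $(A,B)$-linkage in $G_{n,p}$ of size exactly $s=\lceil 10k/\eps\rceil$.

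Next I would apply Lemma~\ref{Phibound}(i) to obtain $\Phi_Q=\Omega((n')^{8k^2/s})=n^{\Omega(1)}$, and then invoke Proposition~\ref{XGbound}(ii) (with error parameter, say, $1/2$) to deduce that $X_Q\ge \ex(X_Q)/2$ with probability $1-O(1/\Phi_Q)$. A routine estimate of $\Psi_Q=p^{e_Q}(n')^{v_Q-1}$, using $p\ge n^{-1/k+\eps}$, $v_Q-1=t$, and $e_Q\le ks$, shows that $\ex(X_Q)=\Theta(\Psi_Q)=n^{\Omega(1)}\gg 1$, so on this high-probability event $X_Q\ge 1$ and $\cG$ contains at least one good copy of~$Q$. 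Translating back produces the required $(A,B)$-linkage $R$ with $|R|=\lceil 10k/\eps\rceil$, and since $1/\Phi_Q=O(n^{-c})$ for some constant $c=c(k,\eps)>0$, the failure probability is much smaller than $1/\log^3 n$.

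The only point that really requires any care is that the construction of $\cG$ and the second-moment argument underlying Proposition~\ref{XGbound} both remain valid when $V_0$ has size $n_0=1$; but this is explicitly allowed by the hypothesis $1\le n_0\le n$ of Lemma~\ref{Phibound} and is flagged in the remark preceding Proposition~\ref{XGbound}, so the argument goes through unchanged. In particular, the Tur\'an-type bound in Lemma~\ref{DGbound} is not needed here, since we only have to produce a single good copy of $Q$, and the bound $1-O(n^{-c})$ we obtain is much stronger than the $1-O(1/\log^3 n)$ demanded by the statement.
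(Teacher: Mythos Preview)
Your proposal is correct and follows essentially the same route as the paper's proof: both specialise the construction from Lemma~\ref{partial} to the case $f=n_0=1$, then combine Lemma~\ref{Phibound}(i) with Proposition~\ref{XGbound}(ii) to conclude that $X_Q>0$ with probability $1-O(n^{-8k^2/s})$, which is far stronger than the required $1-O(1/\log^3 n)$. Your separate verification that $\ex(X_Q)\to\infty$ is harmless but unnecessary, since $\ex(X_Q)\ge \Phi_Q$ (take $Q'=Q$ in the definition of $\Phi_Q$), so $\Phi_Q\to\infty$ already forces $\ex(X_Q)\to\infty$.
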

\proof
Let $\cA:=\{(A,B)\}$ and $s:=\lceil 10k/\eps\rceil$.
We now define $K'$, $K$, $\cG$ and $Q$ exactly as in the proof of Lemma~\ref{partial}.
In particular, for all $j\in [k]$, let $V'_j$ consist of the $j$th vertex in $A$.
For all $j\in [-k]$, let $V'_j$ consist of the $(k+1+j)$th vertex in $B$.
So $V_0$ consists of a single vertex $y$ and $n_0=1$.
Again, a good copy of $Q$ in $K$ containing $y$ corresponds to an $(A,B)$-linkage in $K_n$
(with a similar correspondence between $\cG$ and $G_{n,p}$).
Moreover, again we have $p(n) \ge  (n')^{-1/k+8k/s}$, where $n':=n/(2t)$ and $t:=s-2k$.

Now Lemma~\ref{Phibound}(i) together with Proposition~\ref{XGbound}(ii) imply that with probability 
$1-O(n^{-8k^2/s})$, we have $X_Q>0$. So the error bound is at most $O(1/\log ^3 n)$ (with room to spare), as required.
\endproof

We can now combine Lemmas~\ref{partial} and~\ref{lem:singlelink} in order to prove Lemma~\ref{lem:linking}.

\removelastskip\penalty55\medskip\noindent{\bf Proof of Lemma~\ref{lem:linking}. }
Since $p=p(n)=o(1)$, we can view $G_{n,p}$ as a union of $2\log^2 n$ independent random graphs $G_{n,p_i}$, with $p_i =p'$,
where $p'\ge (1+o(1))p/(2 \log^2 n)\ge n^{-1/k+\eps/2}$. Let $s:=\lceil 30k/\eps\rceil$ and $\cA:=\{(A_1,B_1),\dots,(A_f,B_f)\}$.
Our strategy is to first apply Lemma~\ref{partial} repeatedly to obtain partial linkages until the number of unlinked pairs
in~$\cA$ is less than $\log^2n$ (using a different $G_{n,p_i}$ each time). We will then apply Lemma~\ref{lem:singlelink}
repeatedly in order to link the remaining pairs in~$\cA$ one by one (again, using a different $G_{n,p_i}$ each time).

Let $c=c(k,\eps)$ be as in Lemma~\ref{partial} and let $\cA_0:=\cA$. Suppose that we have obtained a set $\cA_i$ consisting of $(1-c)^i f$ unlinked pairs from~$\cA$
and that we have found a partial $\cA$-linkage $\cR_i$ with parameter~$s$ which links precisely all the pairs in $\cA\setminus \cA_i$.
Let $N_i$ be obtained from $[n]$ by deleting all the vertices in linkages from $\cR_{i}$.
Thus $|N_i|=n- (|\cA|-|\cA_i|)s\ge n-fs \ge n/2$ and so
$p_i=p'\ge n^{-1/k+\eps/2}\ge |N_i|^{-1/k+\eps/3}$. Hence if $|\cA_i|=(1-c)^if>\log^2 n$, we can apply Lemma~\ref{partial} with $\eps/3$ playing the role
of $\eps$ and with the random subgraph of $G_{n,p_i}$ induced by the set~$N_i$ playing the role of $G_{n,p}$. With probability $1-O(1/\log^2 n)$
this yields a partial linkage $\cR'_i$ of size $c|\cA_i|$ and parameter~$s$. Let $\cR_{i+1}:=\cR_i\cup \cR'_i$ and
let $\cA_{i+1}$ denote the set of pairs which are still unlinked.
So $|\cA_{i+1}|=(1-c)^{i+1} f$. 

Let $i^*\ge 0$ be the smallest integer for which $(1-c)^{i^*}f\le \log^2 n$. Thus $i^*\le \log_{1/(1-c)} n$. 
Our argument shows that with probability at least $1-O(i^*/\log ^2 n)$ we can find a partial linkage $\cR_{i^*}$ of parameter~$s$
such that the set $\cA_{i^*}$ of unlinked pairs has size $|\cA_{i^*}|= (1-c)^{i^*} f$.

We will now link up the remaining pairs one by one.
For this, write $\cA_{i^*}=\{(A^*_1,B_1^*),\dots,(A^*_{f^*},B^*_{f^*}) \}$. Thus $f^* \le \log ^2n$.
Let $N_{i^*}$ be obtained from $[n]$ by deleting all the vertices in linkages from $\cR_{i^*}$.
Suppose that $1 \le j  \le f^*$ and that we have obtained 
an $(A^*_i,B^*_i)$-linkage $R_i^*$ for all $i=1,\dots, j-1$ such that all the $R_i^*$ are pairwise disjoint,
$|R_i^*|=s$, $V(R^*_i)\subseteq N_{i^*}$ and such that $R_i^*$ avoids $(A^*_{i'},B_{i'}^*)$ for all $i'\neq i$.
Let 
$$
N^*_j:=\left(N_{i^*}\setminus \left(V(R^*_1)\cup \dots\cup V(R^*_{j-1})\cup \bigcup_{i=1}^{f^*} (A^*_i\cup B^*_i)\right)\right) \cup A^*_{j} \cup B^*_{j}.
$$ 
Thus  $|N^*_j|\ge n - fs \ge n/2$ and so $p'\ge n^{-1/k+\eps/2}\ge |N^*_j|^{-1/k+\eps/3}$.
Hence we can apply Lemma~\ref{lem:singlelink} with $\eps/3$ playing the role
of $\eps$ and with the random subgraph of $G_{n,p_{i^*+j}}$ induced by the set~$N^*_j$ playing the role of $G_{n,p}$.
With probability $1-O(1/\log ^3 n)$ this yields a $(A_j^*,B_j^*)$-linkage $R_j^*$ with $|R^*_j|=s$ and $V(R^*_j)\subseteq N^*_j$.

Since $f^* \le \log ^2n$, this means that altogether, a.a.s.~we can find pairwise disjoint $(A^*_i,B^*_i)$-linkages
for all $i=1,\dots,f^*$ which only use vertices in $N_{i^*}$ and so are disjoint from the linkages in~$\cR_{i^*}$.
\endproof

\section{Deriving Theorem~\ref{thm:riordan}}\label{sec:riordan}

Given a graph $H$ on at least three vertices, we define 
$$ d_2 (H) := {e_H \over v_H -2} \ \ \text{   and   } \ \ d_2^{\rm max} (H):=\max_{H'\subseteq H, \ v_{H'}\ge 3} d_2(H').$$
The purpose of this section is to derive Theorem~\ref{thm:riordan} from the following result of Riordan~\cite{riordan}.
Actually the result in~\cite{riordan} is more general than the version below, as its formulation in~\cite{riordan}
does not require the maximum degree of the $H_n$ to be bounded.
Moreover, it is stated for $G_{n,m}$ with $m=p\binom{n}{2}$ instead of $G_{n,p}$. But Theorem~2.2(ii) of~\cite{Bollobasbook} allows us to apply it to $G_{n,p}$.

\begin{theorem}\label{thm:riordan2}
Let $(H_n)_{n=1}^\infty$ be a fixed sequence of graphs such that $n=v_{H_n}$, $e_{H_n}\ge n$ and such that the maximum degree of
the $H_n$ is bounded. Let $p=p(n)$ be such that
\begin{equation} \label{eq:riordan}
np^{d_2^{\rm max} (H)}\to \infty, \ \ \ pn^2\to \infty\ \  \text{  and  }\ \ (1-p)\sqrt{n}\to \infty.
\end{equation}
Then a.a.s.~$G_{n,p}$ contains a copy of~$H_n$.
\end{theorem}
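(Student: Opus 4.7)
The plan is to prove the theorem via a refined second-moment argument, after first transferring from the $G_{n,p}$ model to the $G_{n,m}$ model where the proof is carried out. The overall strategy is to let $X=X_n$ count the (labeled) copies of $H_n$ in the random graph, show $\mathbb{E}(X)\to\infty$, and then show $\mathbb{E}(X^2)=(1+o(1))\mathbb{E}(X)^2$, so that Chebyshev's inequality yields $\mathbb{P}(X=0)\to 0$.

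First, I would set $m:=\lfloor p\binom{n}{2}\rfloor$ and appeal to Theorem~2.2(ii) of~\cite{Bollobasbook}: since ``containing a copy of $H_n$'' is a monotone increasing property, and since the hypothesis $(1-p)\sqrt{n}\to\infty$ together with $pn^2\to\infty$ places us in the regime where the two models are equivalent for monotone properties holding with probability bounded away from $0$ and $1$, it is enough to establish the result in the $G_{n,m}$ model. The bounded maximum degree assumption gives $e_{H_n}\le \Delta n/2$, and combined with $pn^2\to\infty$ one checks that $\mathbb{E}(X)$ grows at least polynomially (in fact much faster), so the first moment is no issue.

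The heart of the argument is the variance bound. Expanding $\mathbb{E}(X^2)$ as a sum over ordered pairs of labeled copies of $H_n$ and grouping the pairs by the isomorphism type of their edge-intersection $J$, one obtains schematically
\begin{equation*}
\frac{\mathbb{E}(X^2)}{\mathbb{E}(X)^2}\;=\;\sum_{J\subseteq H_n}\Theta\!\left(n^{-(v_J-2)}\,p^{-e_J}\right)
\;=\;\sum_{J\subseteq H_n}\Theta\!\left((np^{d_2(J)})^{-(v_J-2)}\right).
\end{equation*}
The terms with $v_J\le 2$ together contribute $1+o(1)$ thanks to $pn^2\to\infty$. For $v_J\ge 3$ each individual term tends to $0$ because $np^{d_2(J)}\ge np^{d_2^{\max}(H_n)}\to\infty$ by hypothesis, and $v_J-2\ge 1$.

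The main obstacle, and where Riordan's argument becomes delicate, is that the sum above ranges over subgraphs of a graph whose order grows with $n$, so a priori there are many terms and the number of labeled copies of a given type $J$ inside $H_n$ can itself be large. Controlling the total requires using the bounded-degree hypothesis on $H_n$ (to limit how densely each $J$ can be embedded in $H_n$, and to bound the number of relevant isomorphism types of bounded size) together with a careful partition/entropy-style grouping of overlap types according to their $d_2$-density. The crucial point is that subgraphs $J$ with large $v_J$ pay a much bigger penalty in the factor $(np^{d_2(J)})^{-(v_J-2)}$, which absorbs the combinatorial multiplicities coming from the many possible embeddings of $J$ into $H_n$. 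Once this summation is controlled and one obtains $\mathbb{E}(X^2)\le (1+o(1))\mathbb{E}(X)^2$, Chebyshev's inequality yields $X>0$ a.a.s.\ in $G_{n,m}$, and the transfer back to $G_{n,p}$ completes the proof.
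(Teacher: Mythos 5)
This statement is not proved in the paper: it is Riordan's theorem, quoted from~\cite{riordan} (with the passage from the $G_{n,m}$ model to $G_{n,p}$ justified by an appeal to Theorem~2.2(ii) of~\cite{Bollobasbook}). The paper uses it as a black box in Section~7, so there is no internal proof to compare your attempt against.

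That said, your outline is a recognizable roadmap of Riordan's actual argument: pass to $G_{n,m}$, verify the first moment, and bound $\ex(X^2)/\ex(X)^2$ by a sum over overlap subgraphs $J\subseteq H_n$ in which each term scales like $(np^{d_2(J)})^{-(v_J-2)}$, so that $np^{d_2^{\max}(H_n)}\to\infty$ kills every nontrivial term. But what you have written is a sketch, not a proof, and the missing part is precisely the hard part. Two concrete gaps: (1) you do not explain \emph{why} the second-moment computation must be performed in $G_{n,m}$ --- for spanning subgraphs the variance in $G_{n,p}$ is dominated by fluctuations in the total number of edges, and it is conditioning on the edge count that produces the $n^{-(v_J-2)}$ normalization you wrote down (as opposed to the $n^{-v_J}$ that a naive $G_{n,p}$ calculation gives); you simply assert the $(v_J-2)$ form without deriving it. (2) You explicitly flag, but do not resolve, the summation over overlap types $J$ of unbounded order. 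The number of non-isomorphic overlaps and the multiplicity with which each embeds into $H_n$ both grow with $n$, and showing that the decaying factors $(np^{d_2(J)})^{-(v_J-2)}$ beat these multiplicities (using the bounded-degree hypothesis) is the technical core of Riordan's paper. Acknowledging that a step is ``delicate'' is not the same as carrying it out, so the proposal does not establish the theorem; it correctly identifies the strategy and the obstacle.
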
 

Thus in order to derive Theorem~\ref{thm:riordan} from this, it suffices to prove the following proposition.
Note that the third condition in~(\ref{eq:riordan}) does not hold if $p$ is very close to $1$. But since the property of containing the 
$k$th power of a Hamilton cycle is monotonically non-decreasing under the addition of edges, this case follows immediately from the fact that
in our case there is some $p$ satisfying all three conditions 
in~(\ref{eq:riordan}).
\begin{proposition}\label{prop:2dens} Suppose that $n\ge 4k$ and $k\ge 3$. Then $d_2^{\rm max} (C^k_n)\le k+\frac{(k+1)k^2}{n}$.
Moreover, $d_2^{\rm max} (C^2_n)=3$ if $n\ge 18$.
\end{proposition}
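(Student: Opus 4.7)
The plan is to bound $d_2(H)$ for an arbitrary subgraph $H\subseteq C^k_n$ with $v_H=v\ge 3$, splitting on whether there exist $k$ consecutive positions on $C_n$ that are entirely avoided by $V(H)$.

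First I will handle the case where such a block of $k$ consecutive missing vertices exists. Deleting these $k$ vertices turns $C_n$ into a path on $n-k$ vertices, and any edge of $C^k_n$ between two vertices of $V(H)$ corresponds to a cycle-path of length at most $k$ that cannot pass through the $k$ consecutive gap positions (otherwise the path would contain all of them and have length at least $k+1$); it is therefore realised by an edge of the $k$th power of this path. Hence $H\subseteq P^k_{n-k}$. Combining this with the formula $e(P^k_v)=kv-k(k+1)/2$ for $v\ge k+1$ (and the trivial bound $\binom{v}{2}$ for $v\le k$) yields $d_2(H)\le k+k(3-k)/(2(v-2))\le k$ because $k\ge 3$, well within the target.

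In the complementary case every arc of consecutive missing vertices has length at most $k-1$, so summing arc lengths over the at most $v$ arcs yields $n-v\le v(k-1)$ and hence $v\ge n/k$; this also subsumes the case $V(H)=V(C^k_n)$. Here I will use only the crude degree bound $e_H\le kv$ (each vertex of $C^k_n$ has degree $2k$), giving $d_2(H)\le k+2k/(v-2)$. The required inequality $2k/(v-2)\le(k+1)k^2/n$ rearranges to $v-2\ge 2n/((k+1)k)$, and substituting $v\ge n/k$ reduces this in turn to $n\ge 2k(k+1)/(k-1)$; under the hypothesis $n\ge 4k$ this is equivalent to $k\ge 3$, so the two hypotheses are exactly what the argument requires.

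For the second assertion, that $d_2^{\max}(C^2_n)=3$ whenever $n\ge 18$, the lower bound is immediate because $C^2_n$ contains a triangle. The matching upper bound runs the same dichotomy with $k=2$, together with the trivial bound $e_H\le\binom{v}{2}$ to cover the small-$v$ regime (notably $v=3$, where the short-plus-long analysis alone gives only $d_2\le 4$); the threshold $n\ge 18$ is a comfortable choice making every sub-case fit. The delicate point throughout is the arithmetic linking the pigeonhole bound $v\ge n/k$ from the no-gap case to the allowed slack $(k+1)k^2/n$: it is exactly the chain $v\ge n/k\Rightarrow v-2\ge 2n/((k+1)k)$, valid under $n\ge 4k$ and $k\ge 3$, that makes everything fit, and the tight case $k=3$, $n=4k=12$ (where several of these inequalities hold with equality) is a useful sanity check.
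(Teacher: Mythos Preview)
Your argument is correct and follows essentially the same approach as the paper: both split on whether $H$ avoids $k$ consecutive cycle positions (equivalently, whether $H\subseteq P^k_n$), bound $d_2(H)\le k$ in the path case via the natural ordering on $P^k_m$, and use $v_H\ge n/k$ together with a crude edge count in the remaining case. The only cosmetic difference is that in the no-gap case the paper deletes at most $\binom{k+1}{2}$ wrap-around edges to reduce to the path case, whereas you apply the degree bound $e_H\le kv_H$ directly---your route is in fact marginally tighter and makes the role of the hypotheses $n\ge 4k$, $k\ge 3$ explicit.
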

\proof
Let us first consider the case when $k\ge 3$. Consider any $H\subseteq C^k_n$ on $v_H\ge 3$ vertices.
Suppose first that $H\subseteq P^k_n$. Thus there is an ordering $x_1,\dots,x_{v_H}$ of the vertices of
$H$ such that for all $i=2,\dots,v_H$ every $x_i$ has at most $k$ neighbours amongst $x_1,\dots,x_{i-1}$.
Since $d_2(H[\{x_1,x_2,x_3\}])\le 3\le k$, it follows that $d_2(H)\le k$.
Now suppose that $H\not\subseteq P^k_n$. Then $v_H\ge n/k$ and by deleting at most $\binom{k+1}{2}$ edges
from $H$ one can obtain a subgraph $H'$ with $H'\subseteq P^k_n$. Thus
$$d_2(H)\le d_2(H')+\frac{\binom{k+1}{2}}{v_H-2}\le d_2(H')+\frac{\binom{k+1}{2}}{n/k-2}\le k+\frac{(k+1)k^2}{n}$$
since $n\ge 4k$. A similar argument shows that $d_2^{\rm max} (C^2_n)=3$ if $n\ge 18$.%
   \COMMENT{If $H\subseteq P^2_n$ then $d_2(H)\le \min\{3,2v_H/(v_H-2)\}$. If $H\not\subseteq P^2_n$, then
$v_H\ge n/2$ and by deleting at most $\binom{3}{2}=3$ edges
from $H$ one can obtain a subgraph $H'$ with $H'\subseteq P^2_n$. Thus
$d_2(H)\le d_2(H')+\frac{3}{v_H-2}\le \frac{2v_H}{v_H-2}+\frac{3}{n/2-2}\le \frac{n}{n/2-2}+\frac{3}{n/2-2}\le 3$
since $n\ge 18$.}
\endproof

\section{Acknowledgements}

We are extremely grateful to Nikolaos Fountoulakis for helpful discussions throughout the project and comments on the manuscript.
We are also indebted to the referees for pointing out an error in an earlier version.

\medskip

{\footnotesize \obeylines \parindent=0pt

Daniela K\"{u}hn \& Deryk Osthus 
School of Mathematics
University of Birmingham
Edgbaston
Birmingham
B15 2TT
UK
}
\begin{flushleft}
{\it{E-mail addresses}:
\tt{\{d.kuhn, d.osthus\}@bham.ac.uk}}
\end{flushleft}

\end{document}